\numberwithin{equation}{main@theorem}
\newcommand{\alignhack}[2]{\mathrel{\makebox[\widthof{$#2$}]{$#1$}}} 
\newcommand \op {\mathrm{op}}
\newconstant \Set   {Set}
\newconstant \Space {Spc}
\newconstant \Cat   {Cat}
\newconstant \Groupoid {Gpd}
\newconstant \point {pt}
\newfunction \Presheaf {PSh}
\let \PSh \Presheaf
\newfunction \Fun {Fun}
\newfunction \Map {Map}
\newfunction \Sh {Sh}
\newfunction \id {id}
\newfunction \Nat {Nat}
\newfunction \evaluation {ev}
\newfunction \constant {const}
\newfunction \forget {forget}
\newfunction \SemiRepresentable {SR}
\newfunction \Hom {Hom}
\newfunction \idx {idx}
\newfunction \projection {pr}
\newfunction \coCart {coCart}
\newcommand \nerve {\tilde{N}}
\newcommand \slice {\mathrel{/}}
\newcommand \opens {\mathcal{U}}
\newcommand \senv {\push_{\Delta_+/\Delta}}
\newcommand \LeftFibration {\mathtt{l-fib}}
\newfunction \push {lke}
\newcommand \getitem {\mathrm{item}} 
\title{An alternative to hypercovers}
\author{Andrew W. Macpherson}
\begin{document}
\maketitle
\begin{abstract}

  I introduce a class of diagrams in a Grothendieck site called \emph{atlases} which can be used to study hyperdescent, and show that hypersheaves take atlases to limits using an indexed `nerve' construction that produces hypercovers from atlases. Atlases have the flexibility to be at the same time more explicit and more universal than hypercovers.

\end{abstract}

\tableofcontents

\section{Introduction}

A presheaf $F$ on a topological space $X$ is said to \emph{satisfy descent} along an open cover $\{U_i\subseteq V\}_{i:I}$ of an open set $V\subseteq X$ if its sections over $V$ can be computed by taking the limit over the \v{C}ech nerve
\begin{equation}
  \begin{tikzcd}
    \cdots \ar[r]\ar[r, shift left=1ex]\ar[r, shift right=1ex] & 
      \left( \coprod_{i:I}U_i \right) \times_X \left( \coprod_{i:I}U_i \right) 
        \ar[l, shift left=.5ex] \ar[l, shift right=.5ex]    
        \ar[r, shift left = .5ex] \ar[r, shift right = .5ex] & 
      \coprod_{i:I}U_i  \ar[l]
  \end{tikzcd}
  ( \longrightarrow X) 
\end{equation}
(for sheaves of sets, the first two terms, which appear here explicitly, are enough). It is called a \emph{sheaf} if it satisfies descent along all open covers. This perspective, popularised in \cite{SGA4}, is the starting point for topos theory and the modern approach to numerous flavours of cohomology theory.

For computing global sections  --- i.e.~cohomology --- it is useful to be able to restrict attention to covers belonging to a certain \emph{site} for $X$. For example, if $X$ is a manifold, then a convenient site is the poset $\opens^o(X)$ of open sets diffeomorphic to $\R^n$. Since these are exactly the contractible open sets, the classifying space of this poset has the same homotopy type as $X$. Hence, this site is useful for computing homotopy invariants of $X$, such as the theory of local systems on $X$.

Because contractible subsets of $X$ are not closed under intersection, the \v{C}ech nerve construction is not available for every cover in the site $\opens^o(X)$. The preceding definition of descent along open covers does not, therefore, translate easily into the new context. What we need is a generalisation in which a binary intersection of open subsets can be resolved with its own open cover, and again for the intersections of  \emph{those} open sets, and so on \emph{ad infinitum}. This is the intuition that the hypercovers of \cite[Exp.~V, 7.3]{SGA4} attempt to capture.

Today, the theory of hypercovers is woven into the fabric of nearly all flavours of homotopical sheaf theory (with the bulk of \cite[Chap.~6]{HTT} being a notable exception). They admit an elegant characterisation in terms of the model structure on simplicial presheaves \cite{DHI}. 

In practice, we must often construct hypercovers by conversion from more basic `naturally occurring' data. This process destroys finiteness and, perhaps, intuitiveness. Why not try instead to capture these `naturally occuring' data directly?
D.~Quillen is supposed to have said that only when he ``freed [himself] from the shackles of the simplicial way of thinking'' was he able to discover his Q-construction in algebraic $K$-theory \cite{Grayson:Quillen}.
The notion of \emph{atlas} I now define arose in the search for a similar liberty in the context of descent theory:

\begin{definition}[Atlas]
\label{atlas/definition}

  Let $T$ be an $\infty$-category equipped with a Grothendieck topology in the sense of \cite[Def.~6.2.2.1]{HTT}, $X:T$ an object. An $\infty$-functor $U:I\rightarrow T_{/X}$ is said to be an \emph{atlas} for $X$ if for any finite direct category $K$ and monotone map $\alpha:K\rightarrow I$, the set
  \[ 
    \{U_{\tilde{\alpha}} \rightarrow \lim_{k:K}U_{\alpha(k)}  \}_{\tilde\alpha:I_{/\alpha}}
  \] 
  is a covering in $\Presheaf(T)$, where $I_{/\alpha}$ is the overcategory of \cite[\S1.2.9]{HTT} (also known as the category of left cones over $K\rightarrow I$), and $U_{\tilde\alpha}$ stands for the value of $U\circ\tilde\alpha:K^\triangleleft\rightarrow T_{/X}$ on the cone point.

\end{definition}

\begin{definition}[Descent along atlases]
\label{atlas/descent/definition}

  An $\infty$-presheaf $F:T^\op\rightarrow\Space$ is said to \emph{satisfy descent along atlases} if each open $V\subseteq X$ and atlas $(I,U)$ of $V$ induces an equivalence of spaces
  \[
    F(V)\tilde\rightarrow \lim_{i:I}F(U_i). 
  \]
  (Here, of course, the limit in the $\infty$-category $\Space$ of spaces must be understood in the sense of $\infty$-category theory \cite[Def.~1.2.13.4]{HTT}.)
  
\end{definition}

The relevance of this definition is captured by the following statement:

\begin{conjecture}
\label{main/conjecture}

  The following conditions on an $\infty$-presheaf $F:T^\op\rightarrow\Space$ are equivalent:
  \begin{enumerate}
  
  \item $F$ satisfies descent along atlases.
  
  \item $F$ satisfies hyperdescent.
  
  \end{enumerate}
  In particular, the full subcategory of $\Presheaf_\infty(X)$ spanned by the functors that satisfy descent along atlases is a hypercomplete topos.

\end{conjecture}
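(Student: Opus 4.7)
The strategy is to construct an ``indexed nerve'' $\nerve U$ of an atlas $(I,U)$ --- an augmented simplicial object of $T$ whose $n$-th term assembles the limits $\lim_{k:K}U_{\alpha(k)}$ over all length-$n$ chains $\alpha:[n]\rightarrow I$ --- and to establish two key properties: (i) $\nerve U\rightarrow X$ is a hypercover in the sense of \cite[6.5.3]{HTT}, and (ii) for any presheaf $F$, the limit of $F$ over the atlas diagram $I$ is naturally equivalent to the limit of $F$ over $\nerve U$.

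Granting (i) and (ii), the implication (2) $\Rightarrow$ (1) is immediate: hyperdescent applied to $\nerve U$ gives $F(V)\simeq\lim F(\nerve U)$, and (ii) identifies this with $\lim_{i:I}F(U_i)$. For the converse (1) $\Rightarrow$ (2), I would observe that every hypercover $W_\bullet\rightarrow V$ can itself be regarded as an atlas --- for instance via its category of simplices $\simp W_\bullet\rightarrow T_{/V}$ --- for which the covering condition in Definition \ref{atlas/definition} unpacks to the iterated matching-object condition characterising hypercovers. Atlas descent applied to this atlas then immediately recovers hyperdescent.

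For the nerve itself, the natural candidate is to Kan-extend the functor $\simp I\rightarrow T_{/X}$ sending a chain $\alpha:K\rightarrow I$ to $\lim_{k:K}U_{\alpha(k)}$ along the projection $\simp I\rightarrow\Delta^\op$; this is essentially the content suggested by the macros $\senv$ and $\push$ introduced in the preamble. The atlas hypothesis, which demands that each such limit be covered by cone objects indexed by $I_{/\alpha}$, then translates termwise into the hypercovering condition that the $(n+1)$-th term surject, in the sheaf-theoretic sense, onto the $n$-th matching object. Identifying $\lim F(\nerve U)$ with $\lim_{i:I}F(U_i)$ is a cofinality problem which, by a Bousfield--Kan style argument, should reduce to the contractibility (in $\Presheaf(T)$) of the slices $I_{/\alpha}$ --- a property that is itself forced by the atlas condition applied to $\alpha$.

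The principal obstacle is the precise formulation and verification of the hypercover property for $\nerve U$: hypercovers are defined recursively via matching objects, and the atlas axiom must be unwound carefully to show that these conditions match up exactly with the cone-indexed coverings $\{U_{\tilde\alpha}\rightarrow\lim U_\alpha\}_{\tilde\alpha}$ appearing in Definition \ref{atlas/definition}. A secondary difficulty is discharging the cofinality step in (ii) uniformly in $F$, which is presumably where the full strength of the atlas axiom (ranging over \emph{all} finite direct $K$, not just chains) is consumed. Once this correspondence is cleanly established, the closing assertion --- that the full subcategory of atlas-descent presheaves is a hypercomplete topos --- follows formally, as by the equivalence this subcategory coincides with the $\infty$-category of hypersheaves, known by \cite[6.5.2]{HTT} to be a hypercomplete topos.
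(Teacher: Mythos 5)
First, note that the statement you are proving is presented in the paper as a \emph{conjecture}: the paper establishes only the implication (2)~$\Rightarrow$~(1), and only in the truncated setting of Theorem~\ref{main/descent-theorem} (locales, atlases indexed by posets, hypercomplete sheaves). For that direction your outline agrees with the paper's strategy --- build a simplicial nerve of the atlas, show it is a hypercover, and transfer the limit along a cofinal comparison functor --- but the specific nerve you propose is the one the paper explicitly rules out. Kan-extending $\alpha\mapsto\lim_{k}U_{\alpha(k)}$ over chains $\alpha:[n]\rightarrow I$, i.e.\ along the projection from the category of simplices of the \emph{standard} nerve of $I$, produces a simplicial object whose $1$-simplices with boundary $(i_0,i_1)$ exist only when there is a morphism $i_0\rightarrow i_1$ in $I$; for the basic atlas $\{U\cap V\subseteq U,\ U\cap V\subseteq V\}$ of Example~\ref{atlas/example/basic} there are no such simplices, so the level-$1$ matching condition asks the empty family to cover $U\cap V$ and the construction fails to give a hypercover. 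The paper instead uses the tautological test functor $\nerve(I)_n=\Hom(\smallint\Delta^n,I)$ of \S\ref{nerve}, whose $1$-simplices are spans $i_{01}\rightarrow i_0$, $i_{01}\rightarrow i_1$; this is exactly the flexibility the atlas axiom is designed to feed. Your cofinality step is also misattributed: what is needed is weak contractibility of the comma categories $i\downarrow\smallint\nerve(I)\cong\smallint\nerve(i\downarrow I)$ (Lemma~\ref{nerve/refinement/slice}), which holds \emph{unconditionally} by test-category theory because $i\downarrow I$ has an initial object; it is not a consequence of the atlas axiom, and the slices $I_{/\alpha}$ you invoke are in general not contractible in any sense relevant to categorical cofinality.

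The implication (1)~$\Rightarrow$~(2) is where the genuinely open problem sits, and your sketch does not close it. The index diagram of a hypercover is a left fibration $\idx_W\rightarrow\Delta^\op$ satisfying local lifting only against $\smallint\partial\Delta^n\slice\smallint\Delta^n$ \emph{relative to} $\Delta^\op$ (Proposition~\ref{hypercover/fill-condition}), whereas the atlas axiom demands a covering by cones $(\idx_W)_{/\alpha}$ for \emph{every} finite direct $K$ and every $\alpha:K\rightarrow\idx_W$, with no compatibility with the projection to $\Delta^\op$ imposed. Already for $K$ a two-element discrete set whose image lies in two different simplicial degrees this requires an iterated filling argument that is nowhere supplied, and for general $K$ it is precisely the unresolved content of the conjecture; asserting that the atlas condition ``unpacks to the iterated matching-object condition'' is the statement to be proved, not an observation. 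One would additionally need to verify that descent along this putative atlas recovers the totalization over $\Delta^\op$, another cofinality claim left implicit. The closing assertion (that the equivalence identifies the atlas-descent presheaves with the hypercomplete topos of hypersheaves) is fine, but only once both implications are actually in hand.
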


The main application I have for this theory of atlases is a formulation of the universal property of (derived) geometry, which appears in a separate work \cite{Mac:DerivedGeometry}.
In the present paper, I prove a truncated version of Conjecture \ref{main/conjecture} that applies to topological spaces --- more precisely, to locales --- and is sufficient for the application of \emph{op.~cit}..

\begin{theorem}
\label{main/descent-theorem}

  Let $X$ be a locale with lattice of open sets $\opens(X)$, and let $F:\opens(X)^\op\rightarrow\Space$ be a hypercomplete $\infty$-sheaf.\footnote{More accurately, $F$ is a sheaf on the nerve of the poset $\opens(X)$.} Then $F$ satisfies descent along atlases indexed by posets.
  
\end{theorem}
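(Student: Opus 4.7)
The plan is to convert the atlas data $(I,U)$ into a hypercover of $V$ in the locale $\opens(X)$, then apply hypercompleteness of $F$ together with a cofinality argument to identify the resulting cosimplicial totalisation with $\lim_{i:I}F(U_i)$.

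The hypercover is built by an \emph{indexed nerve} construction: at simplicial degree $n$, one assembles the refinement data provided by the atlas axiom applied with $K=[n]$, namely that each monotone $\alpha:[n]\to I$ gives a covering $\{U_{\tilde\alpha}\to\bigcap_k U_{\alpha(k)}\}_{\tilde\alpha\in I_{/\alpha}}$ of the corresponding intersection. Indexing over $(\alpha,\tilde\alpha)$-pairs and taking coproducts yields a simplicial presheaf $\nerve U$ whose matching map at level $n$ restricts, on each $\alpha$-component, to exactly this atlas-provided covering. Since coverings are effective epimorphisms of sheaves on the locale $\opens(X)$, the hypercover property is immediate. Hypercompleteness of $F$ then yields $F(V)\simeq\lim_{[n]:\Delta}F(\nerve U_n)$; applying $F$ to the coproducts and rewriting via Fubini for limits, this becomes a single limit $\lim_\mathcal{E} F(U_{\tilde\alpha})$ indexed by the Grothendieck construction $\mathcal{E}$ of the simplicial $(\alpha,\tilde\alpha)$-indexing.

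It then suffices to exhibit the forgetful functor $\mathcal{E}\to I$, $(\alpha,\tilde\alpha)\mapsto\tilde\alpha$, as initial in the sense of Quillen's Theorem A --- i.e.~to verify that each slice $\mathcal{E}\times_I I_{/i}$ is weakly contractible. The poset hypothesis enters here: each overcategory $I_{/i}$ has $i$ as a terminal object and hence contractible nerve, a property that must be made to propagate to the slice categories via standard arguments on the category of simplices of a contractible simplicial set. This last initiality step is the main obstacle: the coarsest version of $\nerve U$ --- where simplicial face maps preserve the cone vertex $\tilde\alpha$ --- yields an $\mathcal{E}$ that splits as a disjoint union along $\tilde\alpha$ and a totalisation equal to $\prod_{i:I}F(U_i)$ rather than $\lim_{i:I}F(U_i)$. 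The construction must therefore be arranged so that $\mathcal{E}$ genuinely records the morphisms of $I$, using the poset structure on cone vertices; coherently setting this up and verifying contractibility of the enlarged slices is where the bulk of the technical work is expected to lie, and is also the point at which the restriction to poset-indexed atlases appears to be essential.
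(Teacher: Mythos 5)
Your overall strategy---manufacture a hypercover from the atlas by a nerve-type construction, invoke hypercompleteness, and then conclude by a cofinality/initiality argument comparing the Grothendieck construction of the indexing data with $I$---is exactly the route the paper takes. But the proposal has a genuine gap at its centre, and you have in fact located it yourself: the construction of $\nerve U$ is not actually specified in a form that works, and the initiality of $\mathcal{E}\rightarrow I$ is left as ``the bulk of the technical work.'' Two concrete problems. First, the degree-$n$ data you propose---pairs $(\alpha,\tilde\alpha)$ with $\alpha:[n]\rightarrow I$ \emph{monotone} and $\tilde\alpha:I_{/\alpha}$---is essentially degenerate: for a monotone chain into a poset one has $\bigcap_k U_{\alpha(k)}=U_{\alpha(0)}$ and $I_{/\alpha}=I_{/\alpha(0)}$, so the ``atlas-provided covering'' at that stage is trivially witnessed by $\tilde\alpha=\alpha(0)$ and carries no content. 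The covering conditions that actually constitute the hypercover property live in the matching maps, whose source is indexed by \emph{boundary} data (compatible systems over $\smallint\partial\Delta^n$, equivalently over the discrete set of facets, which need not be monotone), and whose target is a genuine intersection $\bigcap_j U_{\tau(j)}$; it is the atlas condition for $K$ a finite \emph{set} that is needed there, not $K=[n]$. Second, as you observe, if face maps preserve the cone vertex the totalisation computes $\prod_i F(U_i)$; fixing this is not a routine ``standard argument'' but the actual content of the theorem.

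The resolution in the paper is to take for the $n$-simplices of $\nerve(I)$ the functors $\smallint\Delta^n\rightarrow I$, i.e.\ Grothendieck's tautological test functor $\nerve(I)=\Hom(\smallint(-),I)$, with $\epsilon:\smallint\nerve(I)\rightarrow I$ given by evaluation at the initial object of $\smallint\Delta^n$ (the nondegenerate top cell). Since $\smallint_+\Delta^n_+=(\smallint_+\partial\Delta^n_+)^\triangleleft$, an $n$-simplex is precisely a boundary datum together with a cone vertex mapping into the limit of the facet values, so the matching condition becomes the atlas condition; and the face maps re-evaluate at the initial object of the face, so the cone vertex genuinely moves and $\mathcal{E}=\smallint\nerve(I)$ records the morphisms of $I$. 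Cofinality is then not an ad hoc contractibility check: one shows $i\downarrow_I\smallint\nerve(I)\cong\smallint\nerve(i\downarrow I)$, and the latter is weakly contractible because $i\downarrow I$ has an initial object and $\smallint\nerve$ preserves weak homotopy type ($\Delta$ being a test category). Without supplying this construction and this identification of slices, the proposal does not yet constitute a proof.
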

\begin{proof}

  It is well-known that hypercompleteness means that $F$ takes hypercovers to limits. See \cite[\S3]{HAGI} (below Definition 3.4.8) for a proof in the context of simplicially enriched categories; to translate into the language of quasi-categories, use  \cite[Prop.~4.2.4.4]{HTT} and \cite[Rmk.~6.5.2.15]{HTT}.

  The data of a hypercover of $V\subseteq X$ can be equivalently formulated as a certain kind of diagram $J\slice\Delta^\op \rightarrow \opens(V)$ indexed by the total space $J$ of a left fibration over $\Delta^\op$, called its \emph{index diagram} \eqref{semi-representable/index-diagram}, which has a certain \emph{local lifting property} (Prop.~\ref{hypercover/fill-condition}).
  
  The result now follows from a \emph{nerve} construction which associates to each diagram $U:I\rightarrow\opens(V)$ of open sets of $V$, indexed by a poset $I$, a diagram 
  \begin{equation*}
    \begin{tikzcd}
      \smallint\nerve(I) \ar[r, "\epsilon"] \ar[d, "\LeftFibration" description] & I \\ \Delta^\op 
    \end{tikzcd}
  \end{equation*}
  such that $U\circ\epsilon:\smallint\nerve(I)\slice\Delta^\op \rightarrow \opens(V)$ has the local lifting properties of a hypercover if and only if $U$ is an atlas of $V$ (Theorem \ref{nerve/theorem}).
  Moreover, $\epsilon$ is $\infty$-cofinal in the sense of \cite[\S4.1]{HTT} (Proposition \ref{nerve/counit/is-cofinal}), whence a colimit over $I$ can be computed on its restriction to $\smallint\nerve(I)$ \cite[Prop.~4.1.1.8]{HTT}.
  Thus if $F$ is hypercomplete and $U$ is an atlas, then $F$ takes $(\smallint\nerve(I), U\circ\epsilon)$ --- and therefore also $(I,U)$ --- to a limit.
  \qedhere
  
\end{proof}

\paragraph{Acknowledgement} Thanks to Adeel Khan whose question exposed an error in an earlier draft of Definition \ref{atlas/definition}.

\section{Preliminaries}

This section contains some background information about posets, 1-categories, and $\infty$-categories, and discussion of coinitiality and initial objects in these contexts.

\begin{para}[$\infty$-categories and $n$-categories]
\label{category}

  Partially ordered sets and classical ($\Set$-enriched) categories are realised, via the nerve construction, as $0$- and $1$-categories inside the category of quasi-categories \cite[Ex.~2.3.4.3, Prop.~2.3.4.5]{HTT}.
  We do not make any linguistic distinction between a poset (resp.~classical category) and its nerve. (This only affects discussions where 0- or 1-categories interact with $\infty$-categories, i.e.~the proof of Proposition \ref{atlas/of-locale/criterion}.)
  The $\infty$-category of $n$-categories, for $n:\N\sqcup\{\infty\}$, is denoted $\Cat_n$.

\end{para}

\begin{para}[0-Truncation]
\label{category/truncation}

  The inclusion $\Cat_0\rightarrow \Cat_\infty$ has a right adjoint $h_0$ (see \cite[Rmk.~2.3.4.13]{HTT}, although this reference does not provide details).
  A realisation of the right adjoint is as follows: given a quasicategory $C$, $h_0C$ is the poset whose elements are the vertices of $C$ and whose relations are $x\leq y$ if and only if $C(x,y)$ is nonempty. 
  From this description it is also clear that the formation of the left cone (as defined in \cite[Not.~1.2.8.4]{HTT}) commutes with $h_0$.

\end{para}

\begin{definition}[0-coinitial]
\label{0-coinitial}

  A subset $K_0\subseteq K$ of a poset $K$ is \emph{0-coinitial} if every element of $K$ is bounded below by an element of $K_0$. 
  
  More generally, a functor $u:K_0\rightarrow K$ between two categories is called 0-coinitial if the image of $K_0$ in $h_0(K)$ is 0-coinitial. Equivalently, for every object $k:K$ there exists a $k_0:K_0$ and a morphism $uk_0\rightarrow k$.

\end{definition}

\begin{lemma}[Pushout of a cone along a coinitial map]
\label{util/cone/reduce-to-coinitial}

  Let $K$ be a poset, $K_0\subseteq K$ a $0$-coinitial subset. Then
  \[
    \begin{tikzcd}
      K_0 \ar[r] \ar[d] & K \ar[d] \\
      K_0^\triangleleft \ar[r] & K^\triangleleft
    \end{tikzcd}
  \]
  is a pushout in the category of posets.
  
\end{lemma}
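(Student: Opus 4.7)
The plan is to verify the pushout universal property directly in the $1$-category of posets, using the explicit description of the left cone $K^\triangleleft$ as the poset obtained from $K$ by freely adjoining a new minimum element $\ast$, and similarly for $K_0^\triangleleft$. In particular the underlying set of the candidate pushout must be $K \sqcup \{\ast\}$, so the only question is about the order relation. A cocone under the span $K_0^\triangleleft \leftarrow K_0 \to K$ with vertex a poset $P$ unpacks as a pair of monotone maps $f : K \to P$ and $g : K_0^\triangleleft \to P$ that agree on $K_0$; writing $p := g(\ast)$, the hypothesis that $g$ is monotone translates into the single condition $p \leq f(k_0)$ for every $k_0 \in K_0$.

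The induced map $\bar f : K^\triangleleft \to P$ is forced to be the set-theoretic extension of $f$ by $\bar f(\ast) := p$, so uniqueness is automatic. The only nontrivial content — and the only step that uses the hypothesis — is that this $\bar f$ is monotone, i.e.\ that $p \leq f(k)$ for every $k \in K$. Given such a $k$, the $0$-coinitiality of $K_0$ supplies some $k_0 \in K_0$ with $k_0 \leq k$, and then the chain $p \leq f(k_0) \leq f(k)$ follows from compatibility of the cocone together with monotonicity of $f$.

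There is no serious obstacle here: once $K^\triangleleft$ is described concretely, the lemma reduces to a single application of $0$-coinitiality. A more categorical phrasing would present the pushout as the coequalising quotient of the disjoint union $K^\triangleleft \sqcup K$ by the preorder generated by identifying the two copies of $K_0$; in that language, $0$-coinitiality is precisely what guarantees that the generated preorder already coincides with the poset structure on $K^\triangleleft$, so no further antisymmetrisation is required.
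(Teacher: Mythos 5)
Your proof is correct and is essentially the paper's argument: the paper checks that every relation in $K^\triangleleft$ (in particular $e\leq j$ for $j\in K$) factors through a relation lifting to $K_0^\triangleleft$ or $K$ via a $0$-coinitial element, while you verify the equivalent cocone universal property directly, using $0$-coinitiality at exactly the same single point. Your closing remark about the generated preorder is precisely the paper's phrasing, so the two proofs coincide in substance.
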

\begin{proof}
  
  The statement for underlying sets is obvious, so we are just checking that all the order relations on $K^\triangleleft = K \sqcup \{e\}$ factorise as strings of relations that lift to  $K_0^\triangleleft$ and $K$.
  The only relations $i\leq j$ in $K^\triangleleft$ that do not lift to $K$ or $\{e\}$ are those for which $i=e$ is the cone point and $j\in K$. In this case, as $K_0$ is 0-coinitial there is some $i'\leq j$ with $i'\in K_0$, and the relation factorises as $i= e \leq i' \leq j$ with $e\leq i'$ a relation in $K_0^\triangleleft$.
  \qedhere
  
\end{proof}

\begin{definition}[Left 0-finite]
\label{left-0-finite}

  A 1-category $K$ is said to be \emph{left 0-finite} if any functor from $K$ into a finitely complete poset admits a limit cone.
  
  For example, this is the case for any $K$ admitting a 0-coinitial subset; in particular, when $K$ admits an initial object.
  
\end{definition}

\begin{para}[Finite intersections]
\label{finite-limit}
  
  Let $U:I\rightarrow J$ be a functor into a finitely complete poset $J$. 
  If $K$ is a left 0-finite category and $\alpha:K\rightarrow I$ is a functor, write 
  \[
    U_{\alpha} \defeq \lim_{k:K} U_{\alpha k} \quad \in \quad \opens(X)
  \]
  for the limit of $U$ in $\opens(X)$ over the diagram $\alpha$.
  Since $\opens(X)$ is a poset, this limit can be computed as an intersection
  \[
    U_\alpha = \bigcap_{k:K_0}U_{\alpha k}
  \]
  where $K_0\subseteq K$ is any finite 0-coinitial subset. 
  If in particular $K$ admits an initial object $e$, then of course $U_\alpha=U_{\alpha (e)}$.
  
\end{para}

\begin{eg}[Simplex boundary]
\label{finite-limit/example/simplex-boundary}

  The category of simplices of the simplex boundary $\smallint_{(+)}\partial\Delta_{(+)}^n$ (see \S\ref{sset} for notation) has a 0-coinitial subset 
  \[
    \{\Delta^{n-1} \stackrel{\sigma_j}{\hookrightarrow}\Delta^n\}_{j=0}^n
  \]
  comprising the facets of $\Delta^n$.
  In particular, $\int_{(+)}\partial\Delta_{(+)}^n$ is left $0$-finite.
  
  For any functor $U:I\rightarrow J$ and map $\tau:\smallint_{(+)}\partial\Delta^n_{(+)}\rightarrow I$, we calculate
  \[
    U_\tau \quad = \quad \bigcap_{j=0}^n U_{\tau\sigma_j}.
  \]
  
\end{eg}

Moving on to the case of cones on a category with initial object:

\begin{para}[Evaluation at initial object is left adjoint to diagonal]
\label{util/cone/initial-object/adjunction}

  Let $K$ be a category with initial object $e$. Evaluation on $e$ is a left adjoint to the constant functor 
  \[
    \constant:I \rightarrow \Fun(K,I), \quad i\mapsto \underline{i}
  \]
  with counit induced by applying $\Fun(-,I)$ to 
  \[
    [\underline{e}\rightarrow\id_K]:\Delta^1\times K\rightarrow K
  \]%
  resulting in a map $\Fun(K,I)\rightarrow \Fun(\Delta^1\times K,I)=\Fun(K,I)^{\Delta^1}$ that transforms $\constant\circ\evaluation_e$ into the identity.
  
  In particular, from the mapping space formula of the adjunction:
  \[
    \Fun(K,I)(\underline{i}, \phi) \cong I(i, \phi(e))
  \]
  for any $\phi:K\rightarrow I$. (It is natural to identify the left-hand side with the category of cones over $\phi$ with vertex $i$ \cite[67]{maclane}.)
  
\end{para}

\begin{lemma}
\label{util/cone/initial-object/pullback}

  Let $K:\Cat_1$ have an initial object $e$. The square
  \begin{equation*}
    \begin{tikzcd}
      \Fun(K,i\downarrow I) \ar[r] \ar[d, "\evaluation(e)"'] & \Fun(K,I) \ar[d, "\evaluation(e)"] \\
      i\downarrow I \ar[r] & I
    \end{tikzcd}
  \end{equation*}
  is a pullback in $\Cat_1$.
  
\end{lemma}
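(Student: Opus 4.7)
The plan is to identify $\Fun(K, i\downarrow I)$ with the strict pullback $\Fun(K, I) \times_I (i \downarrow I)$ by unpacking the data on both sides and invoking the adjunction of paragraph~\ref{util/cone/initial-object/adjunction}. First, I would unpack a functor $F : K \to i \downarrow I$ as a pair $(\phi, \eta)$, where $\phi$ is the composition of $F$ with the forgetful functor $i \downarrow I \to I$, and $\eta : \underline{i} \to \phi$ is the natural transformation whose component at $k$ is the structure morphism $i \to \phi(k)$ of the object $F(k)$. Similarly, a morphism $F \to F'$ in $\Fun(K, i \downarrow I)$ unpacks into a natural transformation $\alpha : \phi \to \phi'$ with the property $\alpha \circ \eta = \eta'$.

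Next, by the adjunction $\evaluation_e \dashv \constant$ of~\ref{util/cone/initial-object/adjunction}, evaluation at the initial object $e$ identifies $\Nat(\underline{i}, \phi)$ with $I(i, \phi(e))$, naturally in $\phi$. Substituting this identification into the preceding description, an object of $\Fun(K, i \downarrow I)$ is equivalent data to a pair $(\phi, f)$ with $\phi : K \to I$ and $f : i \to \phi(e)$, while a morphism $(\phi, f) \to (\phi', f')$ is a natural transformation $\alpha : \phi \to \phi'$ such that $\alpha_e \circ f = f'$, i.e.\ such that $\alpha_e$ defines a morphism from $f$ to $f'$ in $i \downarrow I$. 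This is precisely the data of an object, respectively a morphism, in the strict pullback, and the resulting bijection is visibly realised by the comparison functor induced by the cone $(\evaluation(e), \forget \circ -)$ with which the square is formed.

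The only step requiring real care is confirming that naturality of the bijection $\Nat(\underline{i}, \phi) \cong I(i, \phi(e))$ in the variable $\phi$ correctly translates the triangle $\alpha \circ \eta = \eta'$ in $\Fun(K, I)$ into the triangle $\alpha_e \circ f = f'$ in $I$. This is a formal consequence of the unit/counit description recorded in~\ref{util/cone/initial-object/adjunction}, so I do not anticipate a serious obstacle; the remainder is routine bookkeeping. The same argument, applied to morphism sets between fixed pairs of objects, shows that the comparison is moreover fully faithful, completing the verification that the square is a pullback in $\Cat_1$.
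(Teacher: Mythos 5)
Your argument is correct, and it rests on the same key ingredient as the paper's proof --- the bijection $\Nat(\underline{i},\phi)\cong I(i,\phi(e))$ from \ref{util/cone/initial-object/adjunction} --- but the bookkeeping is genuinely different. You unpack objects and morphisms of $\Fun(K,i\downarrow I)$ by hand (a functor as a pair $(\phi,\eta)$ with $\eta:\underline{i}\to\phi$, a morphism as an $\alpha$ with $\alpha\circ\eta=\eta'$) and match them against the strict pullback, using naturality of the adjunction isomorphism to translate the morphism-level condition into $\alpha_e\circ f=f'$; that naturality check is exactly right and is the only place where care is needed. The paper instead rewrites $\Fun(K,i\downarrow I)$ as $\{\underline{i}\}\times_{\Fun(K,I)}\Fun(K,I)^{\Delta^1}$ via the arrow-category description of the comma category, observes that both horizontal maps of the square are left fibrations, and thereby reduces the whole pullback claim to a single fibrewise bijection on lifts over a fixed $\phi$ --- so it never has to discuss morphisms of the pullback at all. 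Your version is more elementary and self-contained; the paper's is shorter once the fibration formalism is in place, and its left-fibration observation also silently justifies why the \emph{strict} pullback you compute agrees with the pullback in $\Cat_1$ (the relevant legs, e.g.\ $i\downarrow I\to I$, are isofibrations). It would be worth adding one sentence to your write-up making that last invariance point explicit; otherwise the argument is complete.
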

\begin{proof}

  The top-right element can be identified with the category of functors $\psi:\Delta^1\times K \rightarrow I$ with a trivialisation $\psi|_{0}\cong\underline{i}$, as follows:
  \begin{align*}
    \Fun(K,i\downarrow I) &\cong \Fun(K,\{i\}\times_II^{\Delta^1}) \\
    &\cong \{\underline{i}\} \times_{\Fun(K,I)} \Fun(K,I^{\Delta^1}) \\
    &\cong \{\underline{i}\} \times_{\Fun(K,I)} \Fun(\Delta^1\times K,I) \\
    &\cong \{\underline{i}\} \times_{\Fun(K,I)} \Fun(K,I)^{\Delta^1}.
  \end{align*}
  With this identification, the horizontal arrow is the target projection and the vertical is evaluation on $e$.
  
  Now we observe that both horizontal arrows are left fibrations, so it suffices to check that for each $\phi:K\rightarrow I$ evaluation at $\Delta^1\times\{e\}$ induces a bijection
  \[
    \left\{
      \begin{array}{c}
        \psi:\Delta^1\times K\rightarrow I \\
        \psi|_0 \cong \underline{i},\;
        \psi|_1 \cong \phi
      \end{array}
    \right\} 
    \cong I(i,\phi(e)).
  \]
  This follows from the adjunction $\evaluation(e)\dashv\constant$ \eqref{util/cone/initial-object/adjunction}.
  \qedhere

\end{proof}

\section{Atlases for locales}
\label{atlas}

If we restrict attention from arbitrary $\infty$-categories to posets, unsurprisingly we find some substantial simplifications to the general theory. 

\begin{para}[Diagram of opens sets]
\label{atlas/of-locale}

  Let $X$ be a locale with frame of open sets $\opens(X)$ \cite{Johnstone:Stone}.\footnote{If the reader prefers, he may instead let $X$ be a topological space without affecting the arguments. Descent theory is in any case mediated through the associated locale.} A \emph{diagram of open subsets} of $X$ is a monotone map of posets $U:I\rightarrow\opens(X)$. We often abbreviate these data as a tuple $(I,U$). We write $U_i\subseteq X$ for the open set associated to $i:I$ by $U$.

\end{para}

\begin{proposition}[Atlas for a locale]
\label{atlas/of-locale/criterion}
  
  Let $U:I\rightarrow \opens(X)$ be a diagram of open subsets of $X$ indexed by a poset $I$. The following conditions are equivalent:
  \begin{enumerate}
    
    \item $X=\bigcup_{i:I}U_i$, and for any $i,j:I$, $U_i\cap U_j=\bigcup_{k\leq i,j}U_k$.
    
    \item For any finite $J\subseteq I$,
    \begin{equation*}
      \coprod_{\genfrac{}{}{0pt}{2}{i:I}{i\leq j\;\forall j:J}}U_i 
        \twoheadrightarrow \bigcap_{j:J}U_j
    \end{equation*}
    is a covering.
    
    \item (The nerve of) $U$ is an atlas.
    
  \end{enumerate}
  
\end{proposition}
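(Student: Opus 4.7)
The plan is to route the three-way equivalence through condition (2), matching it to the atlas axiom (3) by unpacking the $\infty$-categorical machinery of Definition \ref{atlas/definition} at the level of posets, and to the geometric condition (1) by a short induction on $|J|$.

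For (2) $\Leftrightarrow$ (3), the first step is to translate the three ingredients of the atlas definition into concrete poset-level data. Since $I$ is a poset, for any functor $\alpha : K \rightarrow I$ the overcategory $I_{/\alpha}$ is the subposet
\[
  \{i : I \mid i \leq \alpha(k) \text{ for all } k : K\}
\]
of lower bounds of $\alpha(K)$ in $I$: a cone over $\alpha$ with apex $i$ is a natural transformation $\underline{i} \rightarrow \alpha$, and in a poset such a transformation exists (uniquely) exactly when $i \leq \alpha(k)$ for every $k$. Since $\opens(X)$ is a frame, finite limits exist there and are preserved by the Yoneda embedding into $\PSh(\opens(X))$, so $\lim_{k:K}U_{\alpha(k)}$ is represented by $\bigcap_{k:K}U_{\alpha(k)}$. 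Finally, coverings in $\PSh(\opens(X))$ with representable codomain are the usual open covers on the locale $X$. With these translations, (3) reads: for every finite direct $K$ and monotone $\alpha : K \rightarrow I$,
\[
  \bigcap_{k:K}U_{\alpha(k)} \;=\; \bigcup_{i : I_{/\alpha}} U_i.
\]
Both sides depend only on the image $J := \alpha(K) \subseteq I$, which is a finite subset; conversely every finite $J \subseteq I$ arises as the image of some discrete $K \rightarrow I$, so this condition collapses to (2).

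For (1) $\Leftrightarrow$ (2), the implication (2) $\Rightarrow$ (1) is the specialisation to $|J| = 0$ (yielding $X = \bigcup_{i:I} U_i$, since the empty intersection is $X$ and every $i$ is a lower bound of $\emptyset$) and $|J| = 2$ (yielding the binary-intersection clause). Conversely, (1) $\Rightarrow$ (2) goes by induction on $|J|$: for $|J| \geq 3$, pick $j_0 : J$, apply the inductive hypothesis to cover $\bigcap_{j : J \setminus \{j_0\}} U_j$ by the $U_k$ with $k$ a lower bound of $J \setminus \{j_0\}$, intersect with $U_{j_0}$, and invoke the binary clause of (1) on each resulting $U_{j_0} \cap U_k$ to refine to lower bounds of all of $J$.

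The main obstacle — mostly bookkeeping rather than mathematics — is the translation step in (2) $\Leftrightarrow$ (3): matching $I_{/\alpha}$, the $\PSh$-limit, and the notion of covering in $\PSh(\opens(X))$ with their concrete poset-level counterparts. The identification of $I_{/\alpha}$ requires a little care because Definition \ref{atlas/definition} allows $K$ to be an arbitrary finite direct $1$-category rather than a poset; this is where the commutation of the left cone with $h_0$ from \ref{category/truncation} enters, reducing the computation to the $0$-categorical case described above. Once all the translations are in place, the remaining content is formal.
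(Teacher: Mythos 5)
Your proposal is correct and follows essentially the same route as the paper: $(1)\Leftrightarrow(2)$ by induction on the cardinality of $J$, and $(2)\Leftrightarrow(3)$ by first reducing $K$ to a finite poset via $h_0$ (using the commutation of the left cone with truncation from \ref{category/truncation}) and then to a finite discrete index, observing that both $I_{/\alpha}$ and the limit $\bigcap_{k:K}U_{\alpha(k)}$ depend only on the image of $\alpha$. The paper leaves the induction and the poset-level identifications implicit; you have merely spelled them out.
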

\begin{proof}

  $1\Leftrightarrow 2$ by induction on the cardinality of $J$.

  The criterion 2 is a special case of the definition of atlas; hence $3\Rightarrow2$. Conversely, suppose that $U:I\rightarrow\opens(X)$ satisfies 2, and let $K\rightarrow I$ be a finite diagram with $K$ some $\infty$-category. Then
  \[
    \opens(X)_{/\alpha} = \opens(X)_{/h_0\alpha}
  \]
  because $\opens(X)$ is 0-truncated and the formation $K\mapsto K^\triangleleft$ of the left cone commutes with truncation \eqref{category/truncation}.
  So, replacing $K$ with its truncation $h_0K$, we may assume that it is a finite poset.
  But then also
  \[
    \opens(X)_{/\alpha} = \opens(X)_{/\alpha\mid_{K_0}}
  \]
  where $K_0$ is $K$ regarded as a poset with the trivial ordering. 
  The atlas condition is now handled by 2.
  \qedhere
  
\end{proof}

The language of atlases is more flexible than that of hypercovers: many hypercovers of interest are obtained by conversion from a naturally arising diagram which may itself already be an atlas.

\begin{eg}
\label{atlas/example/basic}

  Let $X=U\cup V$ be a topological space expressed as a union of two open sets.
  The diagram
  \begin{equation*}
    \begin{tikzcd}
      U\cap V \ar[r] \ar[d] & V \\ U
    \end{tikzcd}
  \end{equation*}
  in $\opens(X)$ is an atlas for $X$.
  
  Suppose now we have a further decomposition $U\cap V= A\cup B$ with $A\cap B=\emptyset$. Then the diagram
  \begin{equation*}
    \begin{tikzcd}
      & A \ar[dr, bend left] \ar[dd] \\
      B \ar[rr, crossing over] \ar[dr, bend right] && U \\
      & V
    \end{tikzcd}
  \end{equation*}
  is an atlas for $X$. Notice that the index poset for this atlas has the weak homotopy type of $S^1$. The reader can no doubt imagine a way to realise this diagram as an atlas of contractible open sets in the case $X=S^1$.

\end{eg}

\begin{eg}[Basis]
  
  A basis $I\subseteq \opens(X)$ for the topology of $X$ is an atlas for $X$: condition 2 of Proposition \ref{atlas/of-locale/criterion} follows easily from the definition of a basis.

\end{eg}

\begin{eg}[Schemes]

  A locally ringed space $X$ is a scheme if and only if the inclusion $\opens^\mathrm{aff}(X)\rightarrow\opens(X)$ of the poset of open immersions from affine schemes is an atlas for $X$.
  Note that this poset is closed under finite intersections (resp.~binary intersections) if and only if $X$ is an affine scheme (resp.~has affine diagonal). Thus, the Cech nerve construction is not always available to us within this site.
  
\end{eg}

\begin{eg}[Manifolds]

  A paracompact Hausdorff space $X$ is a topological manifold if and only if the preorder $\opens^o(X)$ of open immersions $\R^n\hookrightarrow X$ is an atlas for $X$.
  This preorder is never closed under binary intersections (unless $X$ is a point). This example, or its $C^\infty$ analogue, was what first motivated me to formulate the notion of atlas.
  
\end{eg}

Atlases give us an easy way to describe certain counterexamples to hypercompleteness:

\begin{eg}[Hilbert cube]

  Consider the Hilbert cube $Q=[0,1]^\N$ as in \cite[Ex.\ 6.5.4.8]{HTT}. 
  The set of open subsets homeomorphic to $Q\times[0,1)$ forms a base for the topology; in particular, it is an atlas. 
  Borel-Moore homology defines a sheaf (by excision) whose restriction to this atlas is zero, but whose global sections are nonzero; hence, by Theorem \ref{main/descent-theorem} it cannot be hypercomplete.

\end{eg}

\section{Simplices}
\label{sset}

Begin with some preliminary remarks on simplicial sets.

\begin{para}[Simplex categories]

  As usual, $\Delta$ is the geometric simplex category of inhabited totally ordered sets, while $\Delta_+$ is the subcategory consisting of injective maps.
  By \cite[Lemma 6.5.3.7]{HTT}, the inclusion $\Delta_+\rightarrow\Delta$ is $\infty$-coinitial, i.e.~for each $n$, $\Delta_+\downarrow_{\Delta}\Delta^n$ is weakly contractible.
  
\end{para}

\begin{para}[Simplices of a simplicial set]
\label{sset/simplices}

  A simplicial set $F$ can be realised as a left fibration in sets by integrating over $\Delta^\op$.
  We denote this fibration by 
  \begin{equation*}
    \smallint F \rightarrow \Delta^\op
  \end{equation*}
  or, more briefly, $\smallint F\slice\Delta^\op$.
  It is called the \emph{category of simplices} of $F$.
  
  Similarly, a semisimplicial set $G$ can be realised as a left fibration $\int_+G \rightarrow \Delta_+^\op$. This is called the category of \emph{nondegenerate simplices} of $G$.  
  
  Let $\senv G$ be the \emph{simplicial envelope} of $G$ --- that is, simplicial set obtained from $G$ by left Kan extension along $\Delta_+\subset\Delta$. As an extension it comes equipped with a canonical functor $\int_+G\rightarrow\int \senv G$.
  
\end{para}

\begin{eg}[Simplex]
\label{sset/example/simplex}

  We write $\Delta^n_+$ for the semi-simplicial set represented by an ordered set with $n+1$ elements.
  The category $\int_+\Delta_+^n$ of nondegenerate simplices is the opposite of the poset of inhabited subsets of $[n+1]$ --- in particular, it is finite. 
  
  The simplicial envelope of $\Delta_+^n$ --- the simplicial set represented by the same ordered set considered as an object of $\Delta$ --- is written $\Delta^n$.
  Its category of simplices $\int\Delta^n$ is infinite, but taking the image of a map $\Delta^k\rightarrow\Delta^n$ defines a coreflector onto the finite subset $\int_+\Delta_+^n$ of nondegenerate simplices. In particular, it is left finite in that limits over $\int\Delta^n$ are equivalent to limits over a finite category --- cf.~\eqref{finite-limit}.
  
\end{eg}

\begin{eg}[Simplex boundary]
\label{sset/example/simplex-boundary}

  Write $\partial\Delta^n_+$ for the semi-simplicial set representing the boundary of the $n$-simplex, and $\partial\Delta^n$ for the associated simplicial set. 
  The category 
  \[\textstyle
    \smallint_+\partial\Delta_+^n \quad = \quad \smallint_+\Delta^n \setminus \{\Delta^n\stackrel{\mathrm{id}}{\rightarrow}\Delta^n\}
  \]
  of nondegenerate simplices of $\partial\Delta^n$ is the poset of inhabited proper subsets of $[k+1]$.
  In particular, 
  \[ \textstyle
    \smallint_+\Delta_+^n = \left(\smallint_+\partial\Delta_+^n\right)^\triangleleft
  \]
  is a categorical left cone over the category of nondegenerate simplices of $\partial\Delta_+^n$.
  
  Similarly, $\smallint\partial\Delta^n=\smallint\Delta^n\times_{\smallint_+\Delta^n_+}\smallint_+\partial\Delta^n_+$ is the category of simplices equipped with a non-surjective map to $\Delta^n$. 
  It is a sieve in $\smallint\Delta^n$.
  It contains $\smallint_+\partial\Delta_+^n$ as a coreflective subcategory and is therefore left finite.   

\end{eg}

\begin{eg}[Mixed category of simplices of a simplex]
\label{sset/example/mixed}

  The square
  \begin{equation*}
    \begin{tikzcd}
      \int_+\partial\Delta_+^n \ar[r] \ar[d] &
      \int_+\Delta_+^n \ar[d] \\
      \int\partial\Delta^n \ar[r] &
      \int\Delta^n
    \end{tikzcd}
  \end{equation*}
  of fully faithful functors isn't quite a pushout in $\Cat$: rather, the pushout is the full subcategory $\int\partial\Delta^n\cup\int_+\Delta_+^n$ of $\int\Delta^n$ spanned by simplices which are either nondegenerate or factor through the boundary.
  This subcategory is coreflective with coreflector fixing the boundary.
  Precisely, the value of the coreflector on $\sigma:\Delta^k\rightarrow\Delta^n$ is either $\sigma$ itself, if the image of $\sigma$ is contained in the boundary, or $\mathrm{Im}(\sigma)$ otherwise.
  
\end{eg}

\section{Index diagrams}
\label{index-diagram}

Hypercoverings of $X$ are defined in \cite[Exp.\ V, \S7.3]{SGA4}, \cite[Def.~4.2]{DHI}, \cite[Def.~3.4.8]{HAGI} as certain simplicial objects in the category of presheaves on $\opens(X)$. 
Some manipulation is required to convert these into diagrams in $\opens(X)$ itself.

In this section, we discuss the exchange of a semi-representable presheaf with a mapping indexing its connected components.

\begin{definition}[Semi-representable presheaves]
\label{semi-representable/definition}

  Let $C$ be a poset. Recall from \cite[Exp.~V,7.3]{SGA4} that a presheaf of sets on $C$ is said to be \emph{semi-representable} if it can be represented as a coproduct of representables.
  The full subcategory of the 1-category $\PSh_1(C)$ of presheaves of sets on $C$ spanned by the semi-representable objects is denoted $\SemiRepresentable(C)$.

\end{definition}

\begin{para}[Set indexed objects]
\label{indexed-object/definition}

  An $S$-indexed element of $C$, where $S:\Set$, is a map $S\rightarrow C$. 
  We write 
  \[
    C^S = \Fun(S,C)
  \]
  for the poset of $S$-indexed elements of $C$.
  Applying a contravariant Grothendieck construction, the 1-category of all set-indexed elements of $C$ is defined 
  \[
    \int_SC^S=\int_{S:\Set}C^S \quad \stackrel{\idx_C}{\rightarrow} \quad \Set
  \]
  as Cartesian fibration over $\Set$.
  Integrating the projection $\idx$ (which takes an \emph{indexed object} to its \emph{indexing set}) yields a left (co-Cartesian) fibration 
  \begin{equation}
  \label{indexed-object/index-set/universal}
    \widetilde{\idx}_C \defeq \int_{\int_SC^S}\idx_C\rightarrow\int_SC^S,
  \end{equation}
  the \emph{universal index set} of set-indexed objects of $C$.
  
\end{para}

\begin{para}
\label{indexed-object/lke}

  The functor $\push_{-/\point}:\int_{S:\Set}\Fun(S,C) \rightarrow \SemiRepresentable(C)$ is constructed as follows:
  \begin{itemize}
    \item 
      An object $X:S\rightarrow C$ goes to the coproduct presheaf $\coprod_{s:S}X_s:\Presheaf_1(C)$;
    
    \item 
      A morphism
        \begin{equation*}
          \begin{tikzcd}[column sep = small]
            S_0 \ar[rr, "f"] \ar[dr, "X_0"'] & \ar[d, phantom, "\Rightarrow"] & S_1 \ar[dl, "X_1"] \\
            & C
          \end{tikzcd}
        \end{equation*}
        in $\int_{S:\Set}C^S$ gets sent to the map
        \begin{align*}
          \coprod_{s:S_0}X_{0,s} 
            & \alignhack{\cong}{\rightarrow} \coprod_{s:S_1}(\push_fX_0)_s \\
          & \alignhack{\cong}{\rightarrow} \coprod_{s:S_1}\coprod_{t:f^{-1}s} X_{0,t} \\
          & \rightarrow \coprod_{s:S_1}X_{1,s}
        \end{align*}
        obtained from the universal property of the left Kan extension $\push_f(X_0)$ of $X_0$ along $f$.
  
  \end{itemize}
  Compatibility with composition follows from uniqueness of the map from the left Kan extension.
  
\end{para}

\begin{proposition}
\label{indexed-object/lke/is-equivalence}

  The groupoid of representations of a semi-representable presheaf as a coproduct of representables is contractible.
  The natural functor 
  \begin{equation*}
    \push_{-/\point} : \int_{S:\Set}C^S \;\tilde\rightarrow\; \SemiRepresentable(C) 
  \end{equation*}
  from the 1-category of set-indexed objects of $C$ to the 1-category of semi-representable presheaves on $C$ is an equivalence of 1-categories.

\end{proposition}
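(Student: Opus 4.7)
The plan is to exhibit a two-sided inverse to $\push_{-/\point}$ using the canonical decomposition of a semi-representable presheaf via the connected components of its category of elements. The central observation is that for each $c:C$, the category of elements $\int h_c = C_{/c}$ has $c$ itself as terminal object, and in particular is connected. Hence for a semi-representable $F = \coprod_{s:S} h_{X_s}$, the category of elements $\int F \cong \coprod_{s:S} C_{/X_s}$ decomposes canonically: $\pi_0(\int F)$ is in natural bijection with $S$, and the terminal object of the $s$-th component recovers $X_s$.

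The contractibility of the groupoid of representations of a fixed $F$ follows directly from this. Given two representations $(S,X,\iota)$ and $(S',X',\iota')$ (where $\iota,\iota'$ are isomorphisms onto $F$), the composite $(\iota')^{-1}\circ\iota$ induces an isomorphism on categories of elements which must preserve the canonical decomposition into components and their terminal objects. This forces a \emph{unique} bijection $S \cong S'$ intertwining $X$ and $X'$, i.e., a unique isomorphism $(S,X)\cong(S',X')$ in $\int_{S:\Set} C^S$ compatible with $\iota$ and $\iota'$.

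For the equivalence of 1-categories, I would check essential surjectivity (which is immediate from the definition of $\SemiRepresentable(C)$) and fully faithfulness. The latter reduces, via the universal property of coproducts and the Yoneda lemma, to the calculation
\[
\Hom\bigl(\coprod_{s:S} h_{X_s},\; \coprod_{t:T} h_{Y_t}\bigr) \;\cong\; \prod_{s:S}\;\coprod_{t:T}\; C(X_s, Y_t).
\]
Because $C$ is a poset, the right-hand side is precisely the set of morphisms $(S,X)\to(T,Y)$ in $\int_{S:\Set} C^S$: a function $f:S\to T$ equipped with witnesses $X_s\leq Y_{f(s)}$ for each $s$. No serious obstacle arises beyond tracking direction conventions for the Cartesian fibration $\int_{S:\Set} C^S \to \Set$ so that morphism data on the two sides are correctly matched; the argument is otherwise a bookkeeping exercise around Yoneda and connected components.
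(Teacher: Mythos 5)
Your proof of the full-faithfulness half is essentially the paper's own argument: essential surjectivity holds by definition of $\SemiRepresentable(C)$, and the hom-set comparison comes down to the identification
\[
  \Hom\Bigl(\coprod_{s:S}h_{X_s},\;\coprod_{t:T}h_{Y_t}\Bigr)\;\cong\;\prod_{s:S}\coprod_{t:T}C(X_s,Y_t),
\]
which you match (via the distributive exchange of $\prod$ and $\coprod$) with the fibrewise description of morphisms in $\int_{S:\Set}C^S$; the paper performs exactly this exchange, starting from $\coprod_{f:T^S}\prod_{s:S}C(X_s,Y_{f(s)})$. Where you genuinely diverge is the contractibility claim. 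The paper does not argue it separately at all --- it falls out formally once $\push_{-/\point}$ is known to be an equivalence, since the groupoid of representations of $F$ is the (homotopy) fibre of an equivalence over $F$. You instead prove it directly and concretely: the category of elements of $\coprod_s h_{X_s}$ is $\coprod_s C_{/X_s}$, each slice is connected with a strictly unique terminal object (strict because $C$ is a poset), so any isomorphism of presheaves over $C$ induces a unique bijection of index sets intertwining the indexed objects on the nose, and the comparison of the two induced isomorphisms of representables is forced by Yoneda plus the subsingleton property of $C(X_s,Y_t)$. Your route buys an explicit canonical inverse ($F\mapsto(\pi_0\smallint F,\ \text{terminal objects})$) and makes the contractibility statement self-contained rather than a corollary; the paper's route is shorter because it piggybacks on the equivalence. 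Both are correct, and your treatment does legitimately use the poset hypothesis (strict uniqueness of terminal objects, subsingleton hom-sets) exactly where it is needed.
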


\begin{proof}

  The functor $\push_{-/\point}$ is essentially surjective by the definition of $\SemiRepresentable(C)$. Hence, we just have to show that it is fully faithful. If $(I,X)$ and $(J,Y)$ are two set-indexed objects, then maps from $X$ to $Y$ are defined as 
  \begin{equation*}
    \left(\int_{S:\Set}C^S\right)((I,X),(J,Y)) \cong \coprod_{f:J^I}\prod_{i:I} C(X_i, Y_{f(i)}).
  \end{equation*}
  Now, the $\#I$ projections
  \begin{equation*}
    \begin{tikzcd}
    \coprod_{f:J^I}\prod_{i:I}C(X_i, Y_{f(i)}) \ar[r] \ar[d, "\mathrm{ev}_i"] 
    & J^I \ar[d, "\mathrm{ev}_i"] \\
    \coprod_{j:J} C(X_i, Y_j) \ar[r] & J
    \end{tikzcd}
  \end{equation*}
  identify this further with $\prod_{i:I}\coprod_{j:J}C(X_i, Y_j)$.
  (This is an application of the Grothendieck construction's commutation with external products; since the objects here are sets, it can be seen by direct computation.)
  
  That $\push$ is fully faithful is now visible from the identifications
  \begin{align*}
    \prod_i\coprod_jC(X_i, Y_j)
      &\cong \prod_i \SemiRepresentable(C) \left(X_i,\coprod_jY_j\right) \\
    &\cong \SemiRepresentable(C) \left(\coprod_{i:I}X_i, \coprod_{j:J}Y_j\right)
  \end{align*}
  with the first equality following because $\SemiRepresentable(C)(X,-)$ commutes with coproducts for representable $X$.
  \qedhere
  
\end{proof}

\begin{para}[Tensoring over $\Set$]
\label{indexed-object/tensoring}

  It follows from Proposition \ref{indexed-object/lke/is-equivalence} that the coproduct over any set $(I_j,X_j)_{j:J}$ of objects of $\int_{S:\Set}C^S$ is exhibited by the obvious morphisms
  \[
    (I_j, X_j) \rightarrow \left(\coprod_{j:J}I_j, \coprod_{j:J}X_j\right).
  \]
  In particular, this defines a formula for a (coproduct-preserving) tensoring of $\int_{S:\Set}C^S$ over $\Set$.
  
  If $C$ has a maximal element, as is the case for the example of interest $C=\opens(X)$, then we can alternatively realise this tensoring as Cartesian product inside $\SemiRepresentable(C)$ by embedding $\Set$ as the full subcategory spanned by formal coproducts of this maximal element.
  This perspective will take priority when considering the parametrised version below \eqref{indexed-object/family/tensoring}.
  
\end{para}

\begin{para}[Local isomorphisms]
\label{semi-representable/local-isomorphism}

  Via totalization $\push_{-/\point}$, a commuting triangle
  \begin{equation*}
    \begin{tikzcd}[column sep = tiny]
      S_0 \ar[rr, "f"] \ar[dr, "X_0"'] & & S_1 \ar[dl, "X_1"] \\
      & C
    \end{tikzcd}
  \end{equation*}
  induces a morphism of semi-representable presheaves. A morphism in $\SemiRepresentable(C)$ induced in this way is called a \emph{local isomorphism}. In other words, local isomorphisms are the morphisms which are Cartesian for the forgetful functor $\idx:\SemiRepresentable(C)\rightarrow \Set$.
  
  The Cartesian property means that for any morphism $F_0\rightarrow F_1$ in $\SemiRepresentable(C)$ factorises uniquely into a composite of a map with \emph{fixed index} --- induced by a morphism (i.e.~inequality) in $\Fun(\idx(F_0), C)$ --- followed by a local isomorphism over $\idx(F_0)\rightarrow\idx(F_1)$.
  
\end{para}

\begin{remark}[Terminology]  
  
  The intuition behind this terminology is as follows: a morphism $\phi:K\rightarrow L$ in $\SemiRepresentable(C)$ is called a local isomorphism if for each connected component $K'\subseteq K$, the restriction of $\phi$ to $K'$ exhibits it as a connected component of $L$. This concept extends term-wise to $s\SemiRepresentable(C)$.
  
  Beware that this notion of local isomorphism has nothing to do with any auxiliary site structure that may exist on $C$ (such as we have in the case $C=\opens(X)$): it refers to locality only in the sense of formal coproducts in $\SemiRepresentable(C)$.
  
\end{remark}


\section{Index diagrams in families}

In this section we study simplicial objects in $\SemiRepresentable(C)$ and, building on \S\ref{index-diagram} exchange them with a mappings from a left fibration over $\Delta^\op$.

\begin{proposition}[Families of set indexed objects]
\label{indexed-object/family}
  
  The 1-category of set-indexed objects of $C$ classifies diagrams
  \begin{equation*}
    \begin{tikzcd}
      I \ar[r] \ar[d, "\LeftFibration" description] & C \\ K
    \end{tikzcd}
  \end{equation*}
  where $I\rightarrow K$ is a left fibration in sets. That is, for any 1-category $K$ there is an equivalence of categories
  \begin{equation}
    \push_{-/K}:
    \left\{
      \begin{tikzcd}
        \cdot \ar[r] \ar[d, "\LeftFibration" description] & C  \\
        K
      \end{tikzcd}
    \right\}
    {\tilde\longrightarrow}
    \Fun\left(K,\int_SC^S\right).
  \end{equation}
\end{proposition}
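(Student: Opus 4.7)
The strategy is to unfold both sides of the claimed equivalence using the Grothendieck construction. The projection $\idx_C:\int_S C^S\to\Set$ is a Cartesian fibration that classifies the pseudofunctor $S\mapsto C^S$, so post-composition with $\idx_C$ induces a forgetful functor $\Fun(K,\int_S C^S)\to\Fun(K,\Set)$. On the other hand, classical (discrete) straightening presents $\Fun(K,\Set)$ as the category of left fibrations in sets over $K$: a functor $\bar F:K\to\Set$ corresponds to the Grothendieck construction $I_{\bar F}\to K$ whose objects are pairs $(k,i)$ with $i\in\bar F(k)$ and whose morphisms $(k_0,i_0)\to(k_1,i_1)$ are morphisms $\alpha:k_0\to k_1$ of $K$ satisfying $\bar F(\alpha)(i_0)=i_1$. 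The proposition therefore reduces to identifying the fibre of $\Fun(K,\int_S C^S)\to\Fun(K,\Set)$ over a fixed $\bar F$ with $\Fun(I_{\bar F},C)$, naturally in $K$ and $\bar F$.

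For the forward direction, I would unpack a functor $F:K\to\int_S C^S$ lifting $\bar F$ as: an assignment to each $k$ of a functor $X_k:\bar F(k)\to C$, and to each morphism $\alpha:k_0\to k_1$ of a natural transformation $g_\alpha:X_{k_0}\Rightarrow X_{k_1}\circ\bar F(\alpha)$, subject to the composition law $g_{\beta\circ\alpha}=(g_\beta\ast\bar F(\alpha))\circ g_\alpha$ inherited from the explicit description of composition in $\int_S C^S$ given in \S\ref{indexed-object/lke}. Setting $\tilde X(k,i)\defeq X_k(i)$ on objects and, for a morphism of $I_{\bar F}$ lying over $\alpha$ from $(k_0,i)$ to $(k_1,\bar F(\alpha)(i))$, $\tilde X(\alpha)\defeq(g_\alpha)_i$, produces a candidate functor $\tilde X:I_{\bar F}\to C$; the composition law above translates componentwise into functoriality of $\tilde X$. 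The inverse construction restricts $\tilde X$ fibrewise to recover the $X_k$ and extracts $g_\alpha$ from the action of $\tilde X$ on the unique morphism of $I_{\bar F}$ lying above $\alpha$ and starting at $(k_0,i)$; the two passages are visibly mutually inverse.

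The main obstacle is purely bookkeeping: one must carefully align the variance conventions---the contravariant pseudofunctor $C^-:\Set^\op\to\Cat$ producing $\int_S C^S$, the covariant functor $K\to\int_S C^S$, and the covariant functor $\bar F:K\to\Set$ straightening to $I_{\bar F}$---so that the natural transformations $g_\alpha$ and their composition identity correspond to morphisms and composition in $I_{\bar F}$. Once the conventions are reconciled the verification is routine, and naturality in $K$ follows at once from naturality of the discrete Grothendieck construction. An efficient alternative would be to package the argument as an instance of the straightening identity $\Fun(K,\int_S C^S)\simeq\int_{\bar F\in\Fun(K,\Set)}\Fun(I_{\bar F},C)$, of which the claim is a direct reformulation.
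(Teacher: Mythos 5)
Your proposal is correct and follows essentially the same route as the paper: both proofs come down to discrete straightening/unstraightening over $K$, the paper packaging it as a chain of diagram-category equivalences starting from the alias $\int_SC^S = \Set\downarrow_{\Cat_0}\{C\}$ and applying $\int_K$, while you fibre $\Fun(K,\int_SC^S)$ over $\Fun(K,\Set)$ and identify each fibre with $\Fun(I_{\bar F},C)$ by explicit element-wise unpacking. Your closing reformulation $\Fun(K,\int_SC^S)\simeq\int_{\bar F}\Fun(I_{\bar F},C)$ is precisely what the paper's displayed chain establishes.
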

\begin{proof}
  Beginning from the alias 
  \begin{equation*}
    \int_SC^S = \Set\downarrow_{\Cat_0}\{C\} 
  \end{equation*}
  we find
  \begin{equation*}
    \begin{array}{@{\vspace{1ex}}rclr}
      
      \Fun\left(K,\int_SC^S\right) &\cong& 
      \left\{
      \begin{tikzcd}
        \ar[dr, phantom, "\Downarrow" very near end] & 
          \Set \ar[d, hook] \\
        K \ar[ur] \ar[r, "{\underline{C}}"'] &  
          \Cat_0
      \end{tikzcd}
      \right\} & \\
      
      & \cong &
      \left\{
      \begin{tikzcd}[column sep = small]
        \cdot \ar[rr] \ar[dr, "\LeftFibration" description] && 
          K \times C \ar[dl, "\projection_K"] \\
        & K
      \end{tikzcd}
      \right\} &
      \text{via }\int_K \\
      
      & \cong &
      \left\{
      \begin{tikzcd}
        \cdot \ar[r] \ar[d, "\LeftFibration" description] & C  \\
        K
      \end{tikzcd}
      \right\}
      
    \end{array}
  \end{equation*}
  where the expressions in braces stand for categories of diagrams of the specified shape with restrictions indicated by the labels, that is:
  \begin{itemize} 
    \item named objects $(K, C,\ldots)$ and arrows ($\underline{C}$,$\projection_K$) are equipped with identifications with their eponym;
    \item arrows marked $\LeftFibration$ are constrained to the category of left fibrations in sets.
    \qedhere
  \end{itemize}
  
\end{proof}

\begin{para}[Tensoring over $\Fun(K,\Set)$]
\label{indexed-object/family/tensoring}

  Let $K:\Cat_1$. Taking the tensoring of $\SemiRepresentable(C)$ over $\Set$ \eqref{indexed-object/tensoring} pointwise, $\Fun(K, \SemiRepresentable(C))$ is tensored over $\Fun(K,\Set)$.
  
  Passing through the equivalence of Proposition \ref{indexed-object/family}, we find the formula
  \begin{equation}
  \label{indexed-object/family/tensoring/formula}
    F \otimes 
    \left(
    \begin{tikzcd}
      E \ar[r] \ar[d] & C  \\
      K
    \end{tikzcd}
    \right)
    \quad
    =
    \quad
    \left(
    \begin{tikzcd}
      \int_K F\times_K E \ar[r] \ar[d] & C  \\
      K
    \end{tikzcd}
    \right)
  \end{equation}
  for tensoring over $F:\Fun(K,\Set)$. This is easiest to see by returning to our assumption \eqref{indexed-object/tensoring} that $C$ has a greatest element so that the tensoring can be calculated as a Cartesian product in the diagram category.

\end{para}

\begin{para}[Simplicial semi-representable presheaves]
\label{semi-representable/index-diagram}

  Combining Propositions \ref{indexed-object/lke/is-equivalence} and \ref{indexed-object/family}, a simplicial object $U_\bullet$ of $\SemiRepresentable(C)$ yields a simplicial object of $\int_{S}C^S$ and hence a diagram
  \begin{equation*}
    \begin{tikzcd}
      \idx_U \ar[d] \ar[r] \ar[rr, bend left]
        \ar[dr, phantom, "\lrcorner", very near start] & 
        \widetilde\idx_C \ar[d] \ar[r] & C \\
      \Delta^\op \ar[r, "U"] & \smallint_S C^S
    \end{tikzcd}
  \end{equation*}
  where $\idx_U\rightarrow\Delta^\op$ is a left fibration.
  The category $\idx_U=\int_{\Delta^\op}(\idx\circ U)$ is called the \emph{index category} or \emph{category of indices} of the object $U_\bullet$, and the data $\getitem_U:\idx_U\rightarrow C$ is called the \emph{index diagram} of $U_\bullet$.
  
\end{para}

\begin{para}[Lifting property for simplicial objects]
\label{lifting-property/for-sset}

  Let $K_\bullet\subseteq L_\bullet$ be an inclusion of simplicial sets. A lifting for the diagram 
  \begin{equation}
    \begin{tikzcd}
      \smallint K \ar[r, "\sigma"] \ar[d] & \smallint J \ar[d] \ar[r, "U"] & C \\
      \smallint L \ar[r] \ar[ur, dotted] & \Delta^\op
    \end{tikzcd}
  \end{equation}
  is, removing the integrals throughout, equivalent to the data of an extension
  \begin{equation}
  \label{lifting-property/dhi/local-iso}
    \begin{tikzcd}
      \push_{\smallint K\slice\Delta^\op}(U\circ\sigma) \ar[r, "\sigma"] 
        \ar[d] & \push_{\smallint J\slice\Delta^\op}U \\
      \cdot \ar[ur, "\texttt{local-iso}" description, dotted]
    \end{tikzcd}
  \end{equation}
  where the blank is something with index category $\smallint \Delta^n\slice\Delta^\op$, the vertical arrow covers the inclusion $\partial\Delta^n\subset\Delta^n$, and, as indicated, the extension is required to be a local isomorphism in the sense of \ref{semi-representable/local-isomorphism}. (Note that this implies that the vertical arrow is also a local isomorphism.)
  Hence, remaining in the category of left fibrations handles the commutativity of the lower triangle in lifts of diagrams like \eqref{lifting-property/diagram}.

\end{para}

\begin{example}[Representable objects]

  Let $V:C$. The index diagram of the (simplicially constant) simplicial presheaf on $C$ represented by $V$ is
  \[
    \begin{tikzcd}
      \Delta^\op \ar[r, "\underline{V}"] \ar[d, equals] & C \\
      \Delta^\op
    \end{tikzcd}
  \]
  
\end{example}

\begin{para}[Tensor-representable objects]
\label{semi-representable/tensor-representable}

  Let $K_\bullet$ be a simplicial set and $V:C$. The index diagram of $V\times K_\bullet$, where this is computed with respect to the tensoring of $\SemiRepresentable(C)$ over $s\Set$, has the form
  \[
    \begin{tikzcd}
      \smallint K \ar[r, "\underline{V}"] \ar[d] & C \\ 
      \Delta^\op
    \end{tikzcd}
  \]
  If $U:\smallint K\rightarrow C$ is another diagram, then morphisms $K_\bullet\otimes V\rightarrow U$ are the same as cones over $U$ with vertex $V$. In particular, if $U$ admits a limit in $C$, then it admits a final object in the category of maps from tensor-representable objects.
  
\end{para}


\section{Local lifting conditions}
\label{lifting-property}

Throughout this section and for the rest of the paper, $X$ will denote a locale with lattice of open subsets $\opens(X)$.

\begin{definition}[Local lifting conditions]
\label{lifting-property/definition}

  A \emph{local lifting problem} on $X$ is a diagram in $\Cat_1$ of the form
  \begin{equation}
  \label{lifting-property/diagram}
    \begin{tikzcd}
      K \ar[r, "\sigma"] \ar[d] & I \ar[d] \ar[r, "U"] & \opens(X) \\
      L \ar[r] & J
    \end{tikzcd}
  \end{equation}
  where, in the examples of interest:
  \begin{itemize}
    
    \item $K\rightarrow L$ is a fully faithful functor; especially, but not always, equivalent to $K\subset K^\triangleleft$.
    
    \item Either $J=\Delta^\op$ or $\Delta_+^\op$, in which case the maps from $K$, $L$, and $I$ are required to be left fibrations, or $J=\point$.
  
  \end{itemize}
  We say that this problem has a \emph{solution}, or that the square \emph{has the local lifting property} (local l.p.), if $U_\sigma$ is covered by sets of the form $U_\tau$, where $\tau$ ranges over completionss
  \[
    \begin{tikzcd}
      K \ar[r, "\sigma"] \ar[d] & I \ar[d] \\
      L \ar[r] \ar[ur, dotted, "\tau"] & J
    \end{tikzcd}
  \]
  of the square.
  
  If $K\subseteq L$ is a functor of 1-categories and $U:I\slice J\rightarrow\opens(X)$ is a diagram, and moreover any local lifting problem \eqref{lifting-property/diagram} has a solution, we say that $K\slice L$ has the \emph{local left lifting property} or \emph{local l.l.p.}~against $(I\slice J,U)$, or that $(I\slice J,U)$ has the \emph{local right lifting property} or \emph{local r.l.p.}~against $K\slice L$.
  
\end{definition}

\begin{prop}
\label{lifting-property/closure-properties}
  Let $U:I\slice J\rightarrow\opens(X)$ be a diagram of open sets. The set of morphisms $K\slice L$ having the local l.l.p.~against $(I,U)$ has the following closure properties:

  \begin{enumerate}
  
    \item It is stable under composition, pushout, and retract. 
    
    \item Suppose that $J=\point$, $L\rightarrow L'$ under $K$, and $K\slice L'$ has the local l.l.p.~against $(I,U)$. Then $K\slice L$ has the local l.l.p.~against $(I,U)$.
    
  \end{enumerate}

\end{prop}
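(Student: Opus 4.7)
The plan is to unwind Definition \ref{lifting-property/definition} one closure property at a time; all four parts reduce to simple manipulations of unions of intersections inside the frame $\opens(X)$.

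For composition with $K\to L\to M$, starting from a local lifting problem with top $\sigma:K\to I$ and bottom $M\to J$, I would first apply the local l.l.p.\ of $K\to L$ to obtain lifts $\tau_\alpha:L\to I$ over $J$ satisfying $U_\sigma=\bigcup_\alpha U_{\tau_\alpha}$, and then apply the local l.l.p.\ of $L\to M$ to each $\tau_\alpha$ to obtain $\rho_{\alpha,\beta}:M\to I$ satisfying $U_{\tau_\alpha}=\bigcup_\beta U_{\rho_{\alpha,\beta}}$; the combined family $(\rho_{\alpha,\beta})$ then solves the original problem. For pushout, given $L'=K'\cup_K L$ and a problem for $K'/L'$ with top $\sigma':K'\to I$, I would restrict along $K\to K'$ and $L\to L'$ to a problem for $K/L$, apply its local l.l.p.\ to get lifts $\tau:L\to I$ over $J$, and combine each with $\sigma'$ via the universal property of the pushout to obtain $\tilde\tau:L'\to I$ over $J$. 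The key computation is that in the $0$-truncated frame $\opens(X)$ a limit over a pushout of indexing posets is the intersection of the two limits, so $U_{\tilde\tau}=U_{\sigma'}\cap U_\tau$, whence $\bigcup_\tau U_{\tilde\tau}=U_{\sigma'}\cap U_{\sigma'|_K}=U_{\sigma'}$.

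For retract, I would convert a local lifting problem on the retract $f:K\to L$ to one on $g:K'\to L'$ by precomposing with the retraction $r:K'\to K$ and postcomposing with the section $s:L\to L'$, apply the local l.l.p.\ of $g$, and restrict the resulting lifts back to $L$ via the section $K\to K'$. Since $r$ is surjective on objects, the intersection $U_{\sigma\circ r}$ coincides with $U_\sigma$ in $\opens(X)$, which is what allows the covering condition to transfer across the retract. For item 2, given a problem for $K/L$, I would extend $L\to\point$ to $L'\to\point$ uniquely (this is precisely the point at which the hypothesis $J=\point$ is essential), apply the local l.l.p.\ of $K/L'$ to obtain lifts $\rho:L'\to I$, and restrict each to $\tau=\rho|_L:L\to I$; since intersecting over the larger poset $L'$ gives a subset of the intersection over $L$, we have $U_\rho\subseteq U_\tau$, so the $\tau$'s still cover $U_\sigma$.

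The only mildly delicate step is the pushout case, which hinges on the fact that because $\opens(X)$ is a $0$-truncated poset, limits of diagrams in it are just intersections of the values on objects, and hence genuinely distribute over pushouts of indexing diagrams; this is what makes the universal-property argument close up. The rest is a routine sequence of diagram chases.
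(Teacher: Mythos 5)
Your argument is correct and follows the same route as the paper's (much terser) proof: for part 1 the paper simply invokes the standard closure properties of classes defined by lifting, while you additionally verify the transfer of the covering condition --- via $U_{\tilde\tau}=U_{\sigma'}\cap U_\tau$ for pushouts and $U_{\sigma\circ r}=U_\sigma$ for retracts --- which is precisely the extra content the ``local'' variant requires, and your part 2 is the paper's factorisation argument plus the monotonicity $U_\rho\subseteq U_{\rho|_L}$. The only blemishes are bookkeeping slips in the retract case (the lift $\tau':L'\to I$ is restricted back to $L$ along the section $L\to L'$, not $K\to K'$, and one should state explicitly that $U_{\tau'}\subseteq U_{\tau'|_L}$ so the restricted lifts still cover $U_\sigma$), neither of which affects the argument.
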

\begin{proof}

  The argument for 1 is the same as for any class of arrows defined by a right lifting property. For 2, any lifting problem \eqref{lifting-property/diagram} can be factorised
  \[
    \begin{tikzcd}
      K \ar[r, equals] \ar[d] & K \ar[r] \ar[d] & I \\
      L \ar[r] & L'
    \end{tikzcd}
  \]
  and so a lift for $K\slice L'$ yields a lift for $K\slice L$.
  \qedhere

\end{proof}

\begin{corollary}[Local lifting property reduces to a coinitial subset]
\label{lifting-property/reduce-to-coinitial}

  Let $K$ be a finite poset, $K_0\subseteq K$ a 0-coinitial subset. 
  Let $U:I\rightarrow\opens(X)$ be a poset-indexed diagram. 
  If $K_0\subset K_0^\triangleleft$ has the local l.l.p.~against $U$, then so does $K\subset K^\triangleleft$.
  
\end{corollary}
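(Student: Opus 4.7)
The plan is to present the inclusion $K \hookrightarrow K^\triangleleft$ as a pushout of $K_0 \hookrightarrow K_0^\triangleleft$ along $K_0 \hookrightarrow K$, and then invoke stability of the local l.l.p.\ under pushouts. The pushout presentation is exactly the content of Lemma \ref{util/cone/reduce-to-coinitial}, and stability under pushout is item~1 of Proposition \ref{lifting-property/closure-properties}. Combined, these immediately transfer the hypothesised local l.l.p.\ of $K_0 \slice K_0^\triangleleft$ against $(I, U)$ to $K \slice K^\triangleleft$, which is what is to be proved.

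The one technical wrinkle, and the step I expect to require actual thought, is that Lemma \ref{util/cone/reduce-to-coinitial} produces a pushout in the category of posets, whereas the closure property is stated for morphisms in $\Cat_1$. This is immediate from the explicit description given in the lemma's proof: the underlying set of $K^\triangleleft$ is literally the union $K \cup_{K_0} K_0^\triangleleft$, the inclusions $K_0 \hookrightarrow K$ and $K_0^\triangleleft \hookrightarrow K^\triangleleft$ are fully faithful, and every order relation of $K^\triangleleft$ factorises through a relation in one of the two summands. Hence the same universal property persists upon regarding the square in $\Cat_1$.

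Should the pushout-based framing be unappealing, an equivalent direct argument is available. By the pushout property, extensions $\tau : K^\triangleleft \to I$ of a given $\sigma : K \to I$ biject with extensions $\tau_0 : K_0^\triangleleft \to I$ of $\sigma|_{K_0}$, under $\tau|_K = \sigma$ and $\tau|_{K_0^\triangleleft} = \tau_0$. Since $K_0$ is $0$-coinitial in $K$, paragraph \ref{finite-limit} gives $U_\sigma = U_{\sigma|_{K_0}}$, and the identical calculation applied to the two cones yields $U_\tau = U_{\tau_0}$. The cover of $U_{\sigma|_{K_0}}$ produced by the hypothesis on $K_0 \slice K_0^\triangleleft$ is therefore the required cover of $U_\sigma$ by the $U_\tau$. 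No further step poses any real obstacle.
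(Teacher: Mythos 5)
Your main argument is precisely the paper's own proof: present $K\slice K^\triangleleft$ as a pushout of $K_0\slice K_0^\triangleleft$ via Lemma \ref{util/cone/reduce-to-coinitial} and invoke closure of the local l.l.p.\ under pushout from Proposition \ref{lifting-property/closure-properties}-1. The extra remarks (the poset-versus-$\Cat_1$ point and the direct cover-transfer argument via $U_\sigma=U_{\sigma|_{K_0}}$) are correct but not needed beyond what the paper already does.
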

\begin{proof}

  By Lemma \ref{util/cone/reduce-to-coinitial}, $K\slice K^\triangleleft$ is a pushout of $K_0\slice K_0^\triangleleft$, and hence inherits the local l.l.p.~by Proposition \ref{lifting-property/closure-properties}-1. \qedhere

\end{proof}

\begin{corollary}[Local lifting properties for simplices versus semi-simplices]
\label{lifting-property/sset-to-ssset}

  The local r.l.p.~for $\int_+\partial\Delta_+^n\slice \int_+\Delta_+^n$ and for $\int\partial\Delta^n\slice \int\Delta$ are equivalent.
  
\end{corollary}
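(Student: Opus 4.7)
My plan is to funnel both local r.l.p.'s through the intermediate inclusion $\int\partial\Delta^n \slice M$, where $M := \int\partial\Delta^n \cup \int_+\Delta_+^n \subseteq \int\Delta^n$ is the subcategory of Example~\ref{sset/example/mixed}. That example supplies the two structural facts I rely on: $M$ is the pushout of $\int_+\partial\Delta_+^n \to \int_+\Delta_+^n$ along $\int_+\partial\Delta_+^n \to \int\partial\Delta^n$, and $M$ is a coreflective subcategory of $\int\Delta^n$ via a coreflector $r$ that fixes $M$ pointwise. Where the semi-simplicial inclusion sits over $\Delta_+^\op$, I pull the relevant left fibration back along $\Delta_+^\op \subseteq \Delta^\op$ so that both sides may be compared inside the same ambient base.

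For the pushout step, the universal property of the pushout immediately gives a bijection between completions of a lifting problem for $\int\partial\Delta^n \slice M$ (under a fixed $\int\partial\Delta^n \to I$) and compatible completions for $\int_+\partial\Delta_+^n \slice \int_+\Delta_+^n$ (under the induced map $\int_+\partial\Delta_+^n \to I$). The associated open sets $U_\tau$ agree on either side because both are the same finite intersection of $U$-values over the boundary piece together with the added nondegenerate simplex, as in~\eqref{finite-limit}, and hence the two local r.l.p.'s coincide. One direction is in any case immediate from Proposition~\ref{lifting-property/closure-properties}-1 (pushout closure).

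For the coreflection step, the splitting $r \circ i = \id_M$ makes $\int\partial\Delta^n \slice M$ a retract in the arrow category of $\int\partial\Delta^n \slice \int\Delta^n$, so Proposition~\ref{lifting-property/closure-properties}-1 (retract closure) gives one direction. The reverse direction is where I expect the main obstacle. Given a solution $\tau_0 : M \to I$, I must extend to $\tau : \int\Delta^n \to I$ over $\Delta^\op$; since $r$ itself is \emph{not} a functor over $\Delta^\op$ (it collapses dimensions), precomposition with $r$ does not work. Instead, I build $\tau$ pointwise using the left fibration structure on $I \to \Delta^\op$: each object $l = (\Delta^k \twoheadrightarrow \Delta^n)$ of $\int\Delta^n \setminus M$ carries a canonical morphism $\id_{\Delta^n} \to l$ coming from the counit of $i \dashv r$, and the left fibration yields an essentially unique lift of this morphism starting at $\tau_0(\id_{\Delta^n})$, defining $\tau(l)$. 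A short check---the only way to factor a surjection onto $\Delta^n$ through an object of $M$ is via $\id_{\Delta^n}$---shows that the slice $i/l$ is a singleton for every such $l$, hence $i$ is an initial functor. Consequently $U_\tau = \lim_{\int\Delta^n} U \circ \tau = \lim_M U \circ \tau_0 = U_{\tau_0}$, and the cover condition transports intact.
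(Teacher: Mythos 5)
Your decomposition through $M = \int\partial\Delta^n \cup \int_+\Delta_+^n$ is exactly the paper's, and your pushout and retract steps are the same uses of Proposition \ref{lifting-property/closure-properties}-1 and Example \ref{sset/example/mixed} that the paper makes. Where you diverge is in the ambient setting, and consequently in the step you flag as the ``main obstacle''. As the corollary is used in Proposition \ref{lifting-property/atlas}, both lifting properties are tested against a plain poset diagram $U:I\rightarrow\opens(X)$ over $J=\point$; there is no left fibration $I\rightarrow\Delta^\op$ in sight. In that setting precomposing a solution $\tau_0:M\rightarrow I$ with the coreflector $r:\int\Delta^n\rightarrow M$ is perfectly legitimate --- this is precisely Proposition \ref{lifting-property/closure-properties}-2, whose hypothesis $J=\point$ is the tell --- and it is the single line the paper spends on this step (the sets $U_\tau$ are unchanged because $\id_{\Delta^n}$ is initial in both $M$ and $\int\Delta^n$). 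Your construction of $\tau$ by unique lifting along a discrete left fibration $I\rightarrow\Delta^\op$, together with the observation that $i\downarrow l$ is a singleton for degenerate surjections so that $i$ is initial, is a correct proof of the stronger fibred version of this step; but it is machinery the intended statement does not need, and if one does read the corollary over $\point$ your construction has nothing to lift along and you must fall back on precomposition with $r$ after all.

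Two smaller points. First, in the pushout step you assert the two local r.l.p.'s ``coincide'' from the bijection of completions under a \emph{fixed} $\sigma:\int\partial\Delta^n\rightarrow I$; but for the direction from $\int\partial\Delta^n\slice M$ back to $\int_+\partial\Delta_+^n\slice\int_+\Delta_+^n$ you must first extend an arbitrary semi-simplicial lifting problem $\sigma_+:\int_+\partial\Delta_+^n\rightarrow I$ to such a $\sigma$. This is true ($\int_+\partial\Delta_+^n$ is coreflective in $\int\partial\Delta^n$ by Example \ref{sset/example/simplex-boundary}, and over the base one can use the simplicial-envelope adjunction), but it is unstated. Second, the paper dispatches the entire converse direction in one line by noting that $\int_+\partial\Delta_+^n\slice\int_+\Delta_+^n$ is directly a retract of $\int\partial\Delta^n\slice\int\Delta^n$ via the image coreflectors, which avoids running your chain backwards and hence avoids the extension issue altogether. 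Neither point is fatal, but both should be addressed if you keep your route.
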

\begin{proof}

  Suppose $U$ has the local r.l.p.~against $\int_+\partial\Delta_+^n\slice \int_+\Delta_+^n$. By Proposition \ref{lifting-property/closure-properties}-1, it has the local r.l.p.~against $\int\partial\Delta^n\slice \int_+\Delta^n_+ \cup \int\partial\Delta^n$. By Example \ref{sset/example/mixed}, this is a retract of $\int\partial\Delta^n\slice \int\Delta$ fixing the boundary, and so by Proposition \ref{lifting-property/closure-properties}-2 we are done.

  Conversely, $\smallint\partial\Delta_+^n\slice\smallint\Delta_+^n$ is a retract of $\smallint\partial\Delta^n\slice\smallint\Delta^n$.
  \qedhere

\end{proof}

\begin{eg}
\label{lifting-property/example/atlas}

  The condition \ref{atlas/of-locale/criterion}-2 for a diagram $(I,U)$ to be an atlas is the local r.l.p.~for inclusions $K \slice K^\triangleleft$ where $K$ is a finite set.
  That is, these morphisms have the local l.l.p.~with respect to any atlas (considered over $J=\point$).
  In particular, the covering condition is the local r.l.p.~against $\int_+\partial\Delta^0 \slice \int_+\Delta^0$, and condition 1 is the local r.l.p.~for $\int_+\partial\Delta^1 \slice \int_+\Delta^1$.
  
\end{eg}

\begin{prop}[Atlases via lifting properties]
\label{lifting-property/atlas}

  The condition for a diagram $U:I\rightarrow\opens(X)$ to be an atlas is equivalent to any of the following local right lifting properties:
  \begin{enumerate}
  
    \item $K\slice K^\triangleleft$ for $K=\emptyset$ and $K=\{0,1\}$.
    
    \item $K\slice K^\triangleleft$ for all finite sets $K$.
    
    \item $\int_+\partial\Delta_+^n\slice \int_+\Delta_+^n$ for $n:\{0,1\}$.
    
    \item $\int_+\partial\Delta_+^n\slice \int_+\Delta_+^n$ for all $n:\N$.
    
    \item $\int\partial\Delta^n\slice \int\Delta^n$ for $n:\{0,1\}$.
    
    \item $\int\partial\Delta^n\slice \int\Delta^n$ for all $n:\N$.
    
  \end{enumerate}
  
\end{prop}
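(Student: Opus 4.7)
The plan is to establish a cycle of implications among the six conditions by leveraging the atlas characterisation in Proposition \ref{atlas/of-locale/criterion}, the identification of $\int_+\Delta_+^n$ as a categorical left cone on $\int_+\partial\Delta_+^n$, and the coinitial reduction of Corollary \ref{lifting-property/reduce-to-coinitial}. Condition 2 is essentially a restatement of Example \ref{lifting-property/example/atlas}, which tells us that the local r.l.p.~against $K \slice K^\triangleleft$ for $K$ a finite set reformulates criterion \ref{atlas/of-locale/criterion}-2. The equivalence $1 \Leftrightarrow 2$ then comes for free from the induction on $|J|$ already used inside the proof of Proposition \ref{atlas/of-locale/criterion}, which shows that the covering plus binary-intersection conditions generate all finite-intersection conditions.

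Next I would identify the small cases on the semi-simplicial side with condition 1 by direct inspection of Example \ref{sset/example/simplex-boundary}: since $\partial\Delta^0$ is empty and $\int_+\Delta_+^0$ is a point, the pair $\int_+\partial\Delta_+^0 \slice \int_+\Delta_+^0$ is exactly $\emptyset \slice \emptyset^\triangleleft$; and since $\int_+\partial\Delta_+^1$ is the discrete poset on two elements and $\int_+\Delta_+^1 = \{0,1\}^\triangleleft$, the pair $\int_+\partial\Delta_+^1 \slice \int_+\Delta_+^1$ is literally $\{0,1\} \slice \{0,1\}^\triangleleft$. Hence condition 3 \emph{is} condition 1, and condition 5 is equivalent to condition 3 by Corollary \ref{lifting-property/sset-to-ssset}.

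It remains to propagate condition 2 to the unbounded conditions 4 and 6. Here the key input is Example \ref{finite-limit/example/simplex-boundary}: the $(n+1)$ facets $\{\sigma_j\}$ form a finite 0-coinitial subset of the finite poset $\int_+\partial\Delta_+^n$, and $\int_+\Delta_+^n$ is the categorical left cone on $\int_+\partial\Delta_+^n$ (Example \ref{sset/example/simplex-boundary}). Corollary \ref{lifting-property/reduce-to-coinitial} therefore reduces the local r.l.p.~for $\int_+\partial\Delta_+^n \slice \int_+\Delta_+^n$ to the local r.l.p.~for $K_0 \slice K_0^\triangleleft$ where $K_0$ is a discrete set of $n+1$ elements — which is precisely a case of condition 2. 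This gives $2 \Rightarrow 4$, and $4 \Leftrightarrow 6$ follows again from Corollary \ref{lifting-property/sset-to-ssset}. The trivial restrictions $4 \Rightarrow 3$ and $6 \Rightarrow 5$ close the loop.

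The whole argument is essentially bookkeeping with small categories; the only step that requires any actual checking is verifying that the facets genuinely form a finite 0-coinitial subset of $\int_+\partial\Delta_+^n$ whose cone lies inside $\int_+\Delta_+^n$ in the way Corollary \ref{lifting-property/reduce-to-coinitial} demands — and this is handled entirely by Examples \ref{sset/example/simplex-boundary} and \ref{finite-limit/example/simplex-boundary}.
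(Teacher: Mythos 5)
Your proof is correct and follows essentially the same route as the paper's: conditions 1 and 2 are read off from Example \ref{lifting-property/example/atlas} and Proposition \ref{atlas/of-locale/criterion}, the step $2\Rightarrow4$ uses Corollary \ref{lifting-property/reduce-to-coinitial} with the facets of Example \ref{finite-limit/example/simplex-boundary} as the 0-coinitial subset, and the passage to conditions 5 and 6 is Corollary \ref{lifting-property/sset-to-ssset}. Your explicit identification of $\int_+\partial\Delta_+^n\slice\int_+\Delta_+^n$ with $\emptyset\slice\emptyset^\triangleleft$ and $\{0,1\}\slice\{0,1\}^\triangleleft$ for $n=0,1$ is just a spelled-out version of the paper's remark that these are equivalent categories.
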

\begin{proof}

  As already observed (cf.~Ex.~\ref{lifting-property/example/atlas}), 1 and 2 correspond exactly to conditions 1 and 2 of \ref{atlas/of-locale/criterion}, respectively.
  
  The implication 2 $\Rightarrow$ 4 is an application of Corollary \ref{lifting-property/reduce-to-coinitial} and Example \ref{finite-limit/example/simplex-boundary}. Meanwhile 4 $\Rightarrow$ 3 because 3 is a specialisation of 4, and 1 $\Leftrightarrow$ 3 because these are equivalent categories.

  The equivalences 3 $\Leftrightarrow$ 5 and 4 $\Leftrightarrow$ 6 follow from Corollary \ref{lifting-property/sset-to-ssset}.
  \qedhere
  
\end{proof}

\section{Hypercovers via lifting conditions}
\label{hypercover}

Hypercovers are defined in terms of \emph{local lifting} conditions in the category of simplicial presheaves \cite[\S3]{DHI}. Via the construction of \ref{semi-representable/index-diagram}, these translate almost directly into local lifting conditions for diagrams, i.e.~in the sense of Definition \ref{lifting-property/definition}.
The key difference is that the lifting conditions of \emph{op.~cit}.~restrict attention to \emph{tensor-representable objects} \eqref{semi-representable/tensor-representable}, while our filling conditions are restricted to \emph{local isomorphisms} \eqref{semi-representable/local-isomorphism}. In this section, we define and compare these variants.

Denote by $\SemiRepresentable(X)$ the category of semi-representable presheaves on $\opens(X)$, and by $s\SemiRepresentable(X)$ its category of simplicial objects.

\begin{proposition}
\label{hypercover/fill-condition}

  The local lifting property for a diagram
  \begin{equation}
  \label{lifting-property/simplex}
    \begin{tikzcd}
      \int\partial\Delta^n  \ar[d] \ar[r, "\sigma"] & 
        \smallint K \ar[d, "\chi"] \ar[r, "U"] & \opens(X) \\
      \int\Delta^n \ar[r] & \Delta^\op
    \end{tikzcd}
  \end{equation}  
  is equivalent to the square
  \begin{equation}
  \label{lifting-property/dhi}
    \begin{tikzcd}
      \partial\Delta^n \otimes U_\sigma  \ar[d] \ar[r, "\sigma"] & 
        \push_{\smallint K\slice\Delta^\op} U \ar[d] \\
      \Delta^n \otimes U_\sigma \ar[r] & \point
    \end{tikzcd}
  \end{equation}
  admitting local liftings in the sense of \cite[\S3]{DHI}, where as above $U_\sigma$ denotes the limit of $U\circ \sigma$.
  
  In particular, an object $U_\bullet:\SemiRepresentable(X)$ is a hypercover if and only if its index diagram has the local r.l.p.~against $\smallint\partial\Delta^n\slice \smallint\Delta^n \slice \Delta^\op$ for all $n:\N$.
  
\end{proposition}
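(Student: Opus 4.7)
The plan is to reduce both the local right lifting property for diagram \eqref{lifting-property/simplex} and the DHI-style local lifting condition for \eqref{lifting-property/dhi} to a single common condition: that the family $\{U_\tau \rightarrow U_\sigma\}$ indexed by extensions $\tau:\smallint\Delta^n\rightarrow\smallint K$ of $\sigma$ over $\Delta^\op$ is a covering in $\opens(X)$.

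First I would unpack the local right lifting property of \eqref{lifting-property/simplex} as spelt out in Definition \ref{lifting-property/definition}. The category $\smallint\Delta^n$ has an initial object, namely the pair $([n],\mathrm{id}_{[n]})$ lying over $[n]\in\Delta^\op$. Hence for any extension $\tau$ of $\sigma$, the limit $U_\tau$ of $U\circ\tau$ in $\opens(X)$ collapses to the single value $U(\tau([n],\mathrm{id}_{[n]}))$, and this value is automatically contained in $U_\sigma$. The local r.l.p.~therefore amounts precisely to: the family $\{U_\tau\}_\tau$, ranging over extensions $\tau$ of $\sigma$, covers $U_\sigma$.

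Next I would unpack the DHI local lifting condition for \eqref{lifting-property/dhi}: by \cite[\S3]{DHI} it asks for a covering $\{V_j\rightarrow U_\sigma\}_j$ such that for each $j$ the restricted square admits a strict lift $\Delta^n \otimes V_j \rightarrow \push_{\smallint K\slice\Delta^\op}U$. Combining the tensor-representable / cone correspondence of \S\ref{semi-representable/tensor-representable} with the identification $\lim U\circ\tau = U_\tau$ above, such a strict lift is equivalent to the data of an extension $\tau_j$ of $\sigma$ together with an inequality $V_j \leq U_{\tau_j}$ in $\opens(X)$. Given this dictionary, both implications become immediate: from DHI data $(V_j,\tau_j)$, enlarging $V_j$ to $U_{\tau_j}$ preserves the covering, yielding the local r.l.p.; conversely, from a covering family $\{U_\tau\}$ of extensions one sets $V_\tau := U_\tau$ and uses the identity inclusion as the strict lift. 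The ``in particular'' clause then follows from the definition of hypercover in \cite[\S3]{DHI}.

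The principal subtlety is correctly bridging the two formalisms: DHI's local liftings test against tensor-representable objects $V \otimes \Delta^n$, whereas our index-diagram condition is phrased via local isomorphisms as in \S\ref{lifting-property/for-sset}. The enabling fact is \S\ref{semi-representable/tensor-representable}: tensor-representable objects represent the ``best'' cones, so they encode precisely the minimal data needed to extract a single open $U_\tau$ from an abstract extension $\tau$.
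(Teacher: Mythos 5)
Your proposal is correct and follows essentially the same route as the paper: both sides are reduced to the common condition that the family $\{U_\tau\to U_\sigma\}$, indexed by extensions $\tau$ of $\sigma$ over $\Delta^\op$, is a covering, via the dictionary identifying a strict lift $\Delta^n\otimes V\to \push_{\smallint K\slice\Delta^\op}U$ extending $\sigma$ with an index-category extension $\tau$ together with the inequality $V\leq U_\tau$, followed by the enlarge-$V$-to-$U_\tau$ / specialise-$V=U_\tau$ argument. The paper merely packages the dictionary more formally (via Lemma \ref{util/fibration/power} and the fixed-index/local-isomorphism factorisation) where you invoke the initial object of $\smallint\Delta^n$ and the cone correspondence of \ref{semi-representable/tensor-representable} directly.
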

\begin{proof}
    
  We will use the same strategy for representing categories of diagrams as in the proof of Proposition \ref{indexed-object/family}. 
  Moreover, we consider \eqref{lifting-property/simplex} in the category of fibrations over $\Delta^\op$ as in \ref{lifting-property/for-sset}; hence we may disregard the lower triangle when constructing fillers for \eqref{lifting-property/simplex}, \eqref{lifting-property/dhi}.
  
  By Lemma \ref{util/fibration/power}, the fibre of
  \begin{equation*}
    \left\{
      \begin{tikzcd}
        \partial\Delta^n \otimes V  \ar[d] \ar[r] &
          \push_{\smallint K\slice\Delta^\op} U \ar[d, equals]  \\
        \Delta^n \otimes V \ar[r] & \push_{\smallint K\slice\Delta^\op} U
      \end{tikzcd}
    \right\}
    \quad \stackrel{\idx}{\rightarrow} \quad
    \left\{
      \begin{tikzcd}
        \smallint\partial\Delta^n   \ar[d] \ar[r, "\partial\tau"] &
          \smallint K \ar[d, equals]  \\
        \smallint\Delta^n  \ar[r, "\tau"] & \smallint K
      \end{tikzcd}
    \right\},
  \end{equation*}    
  where $\tau$ is a variable, is a truth value equal to 
  \begin{equation*}
    \left\{
      \begin{array}{rcll}
        V & \leq & U\circ\partial\tau & \text{in } \Fun(\smallint\partial\Delta^n, \opens(X)) \\
        V & \leq & U\circ\tau & \text{in }\Fun(\smallint\Delta^n,\opens(X))
      \end{array}
    \right\}.
  \end{equation*} 
  If true, the inequalities factorise the above as: 
  \begin{equation*}
    \begin{tikzcd}[column sep = large]
      \partial\Delta^n \otimes V  \ar[d] \ar[r, "\texttt{fixed-idx}"] &
        \push_{\smallint \partial\Delta^n\slice\Delta^\op}
          (U\circ\partial\tau) \ar[r, "\texttt{local-iso}"] \ar[d] &
        \push_{\smallint K\slice\Delta^\op} U \ar[d, equals] \\
      \Delta^n \otimes V \ar[r, "\texttt{fixed-idx}"] &
        \push_{\smallint \Delta^n\slice\Delta^\op}(U\circ\tau) 
          \ar[r, "\texttt{local-iso}"]  &
        \push_{\smallint K\slice\Delta^\op} U.
    \end{tikzcd}
  \end{equation*}
  Now, specialising to the case $\partial\tau = \sigma$, we obtain an identification
  \begin{equation*}
    \left\{
      \begin{tikzcd}
        \partial\Delta^n \otimes V  \ar[d] \ar[r, "\sigma"] &
          \push_{\smallint K\slice\Delta^\op} U  \\
        \Delta^n \otimes V \ar[ur]
      \end{tikzcd}
    \right\}
    \hspace{16em}
  \end{equation*}
  \begin{equation*}
    \hspace{6em}
    \quad\cong\quad
    \left\{
      \begin{tikzcd}
        \push_{\smallint \Delta^n\slice\Delta^\op}(U\circ\sigma) 
          \ar[r] \ar[d] &
          \push_{\smallint K\slice\Delta^\op} U \\
        \push_{\smallint \Delta^n\slice\Delta^\op} (U\circ\tau) 
          \ar[ur, "\texttt{local-iso}" description]
      \end{tikzcd}
      \mid\; 
      V\subseteq U_\tau
    \right\}
  \end{equation*}
  %
  %
  The local lifting property for \eqref{lifting-property/dhi} states that the set of $V$ for which a diagram of the left-hand form exists covers $U_\sigma$; therefore this set indexes a set of diagrams of the right-hand form whose $U_\tau$ cover $U_\sigma$, which is the local l.p.~for \eqref{lifting-property/simplex}.
  Conversely, the local l.p.~for \eqref{lifting-property/simplex} implies liftings for \eqref{lifting-property/dhi} for $V=U_\tau$ covering $U_\sigma$.
  \qedhere

\end{proof}

\begin{lemma}[Powers of Cartesian fibrations]
\label{util/fibration/power}

  Let $f:C\rightarrow D$ be a Cartesian fibration in $\Cat_1$. Then the exponential $f\circ-:C^{\Delta^1}\rightarrow D^{\Delta^1}$ is a Cartesian fibration, with Cartesian morphisms $u:A\rightarrow B$ those of the form
  \begin{equation*}
    \begin{tikzcd}
      A_0 \ar[r, "u_0"] \ar[d] & B_0 \ar[d] \\
      A_1 \ar[r, "u_1"] & B_1
    \end{tikzcd}
  \end{equation*}
  with $u_0$, $u_1$ both $f$-Cartesian.
  
\end{lemma}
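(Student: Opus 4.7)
The plan is to verify the defining universal property of Cartesian lifts componentwise on the two endpoints of $\Delta^1$, using the universal properties of $f$-Cartesian morphisms at each point.

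Given an object $B = (B_0 \to B_1)$ of $C^{\Delta^1}$ and a morphism $w : A' \to (f \circ B)$ in $D^{\Delta^1}$, I construct a Cartesian lift by first choosing $f$-Cartesian lifts $u_0 : A_0 \to B_0$ of $w_0$ and $u_1 : A_1 \to B_1$ of $w_1$. To assemble these into a morphism in $C^{\Delta^1}$, the connecting arrow $A_0 \to A_1$ is extracted from the universal property of the Cartesian morphism $u_1$: the composite $A_0 \to B_0 \to B_1$ lies over $f(A_0) \to f(B_0) \to f(B_1)$, which by commutativity of the square $w$ equals $f(A_0) \to f(A_1) \to f(B_1)$, producing a unique lift $A_0 \to A_1$ rendering the required squares commutative. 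This defines $A = (A_0 \to A_1)$ and $u : A \to B$ in $C^{\Delta^1}$ with $(f \circ -)(u) = w$.

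To verify that $u$ is $(f \circ -)$-Cartesian, suppose given $v : T \to B$ in $C^{\Delta^1}$ and a factorisation $(f \circ -)(v) = w \circ w'$ with $w' : f \circ T \to A'$ in $D^{\Delta^1}$. Pointwise Cartesianness of $u_0$ and $u_1$ yields unique $\tilde w_i : T_i \to A_i$ lifting $w'_i$ and satisfying $u_i \circ \tilde w_i = v_i$. To see that $(\tilde w_0, \tilde w_1)$ underlies a morphism in $C^{\Delta^1}$, one checks that the two composites $T_0 \to A_1$ — namely $\tilde w_1 \circ (T_0 \to T_1)$ and $(A_0 \to A_1) \circ \tilde w_0$ — agree after post-composing with $u_1$ (by commutativity of $u, v$ and the construction of $A_0 \to A_1$) and agree after applying $f$ (by commutativity of $w'$); uniqueness in the universal property of $u_1$ forces them to coincide. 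Uniqueness of $\tilde w : T \to A$ over $w'$ is then immediate pointwise. The claimed characterisation of all $(f \circ -)$-Cartesian morphisms follows from uniqueness up to isomorphism of Cartesian lifts: any such morphism must be isomorphic to the one just constructed, hence has pointwise $f$-Cartesian components.

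The principal obstacle is organisational rather than conceptual: several invocations of the universal properties of $u_0$ and $u_1$ must be carefully sequenced, and one must track which square commutes by virtue of what hypothesis. Once the diagrams are laid out, the proof reduces to a pair of standard diagram chases.
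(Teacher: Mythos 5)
Your proof is correct and follows essentially the same route as the paper's: both verify the Cartesian universal property componentwise and settle the commutativity of the remaining square by the uniqueness clause in the Cartesian property of $u_1$. You are somewhat more complete, spelling out the existence of lifts and the converse characterisation, which the paper leaves implicit.
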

\begin{proof}

  Assume $u:A\rightarrow B$ is Cartesian on each component. Given a map $u':A'\rightarrow B$ and a factorisation $f(A')\rightarrow f(A)\stackrel{f(u)}{\rightarrow} f(B)$ of $f(u')$,
  the Cartesian property for $u_0$ and $u_1$ gives us a unique factorisation
  \begin{equation*}
    \begin{tikzcd}
      A_0' \ar[r] \ar[d] & A_0 \ar[r, "u_0"] \ar[d] \ar[dr, phantom, "\circlearrowright" description] & B_0 \ar[d] \\
      A_1' \ar[r] \ar[ur, phantom, "?" description] & A_1 \ar[r, "u_1"] & B_1
    \end{tikzcd}
  \end{equation*}    
  whereupon it will suffice to prove that the left-hand square is commutative. 

  This follows from the fact that either path around the square is a factorisation of $A'_0\rightarrow B_1$ by $u_1$ lying over $f(A'_0) \rightarrow f(A_1)\rightarrow f(B_1)$, and hence unique as such by the Cartesian property of $u_1$.
  \qedhere
  
\end{proof}

\section{Nerves of atlases}
\label{nerve}

To generate a hypercover --- a kind of simplicial diagram --- from an atlas --- which may be indexed by any category --- we need a kind of `nerve' construction that takes in an arbitrary diagram of open sets and outputs a diagram indexed by (the category of simplices of) a simplicial set, and which transforms the atlas condition into the hypercover condition. 

\begin{remark}[Why not just use the standard nerve?]

  It is easy enough to see that the standard nerve construction \cite[9]{HTT} won't work. Let $I$ be a category, and let $f:\{0,1\}\rightarrow I$ be a functor. This functor admits an extension to $[0\rightarrow 1]$ if and only if $f(0)\rightarrow f(1)$. The most basic examples of atlases violate this (e.g.~\ref{atlas/example/basic}).

\end{remark}

We need to associate a more flexible family of categories to the standard simplices $\Delta^k$. As we will see, the tautological test functor $\nerve$ provided by Grothendieck's theory of test categories \cite{Maltsiniotis} is good enough.

\begin{para}[Tautological nerve]
\label{nerve/of-category/definition}

  If regarded as a functor from $s\Set$ into the full subcategory $\Cat_1\times_{\Groupoid}\Set$ of $\Cat_1$ spanned by the categories whose objects have no non-identity automorphisms, Grothendieck integration admits a right adjoint 
  \begin{equation}
  \label{nerve/formula}
    \nerve: \Cat_1\times_\Groupoid \Set \rightarrow s\Set, \quad
      K \mapsto \Hom(\smallint(-),K),
  \end{equation}
  which is (opposite to) the tautological \emph{test functor} attached to the Grothendieck test category $\Delta$ \cite[Def.~1.3.7]{Maltsiniotis} and \cite[Prop.~1.5.13]{Maltsiniotis}. 
  
\end{para}

\begin{para}[Tautological refinement]
\label{nerve/refinement}
  
  Applying the nerve and then integrating again we obtain a \emph{tautological refinement}:
  \begin{equation}
  \label{nerve/counit}
    \epsilon_J: \smallint\nerve (J) \rightarrow J
  \end{equation}
  where the map $\epsilon_J$ evaluates a simplex $\smallint\Delta^n\rightarrow J$ at its initial object.
  The composite functor $\smallint\circ\nerve$ is a right adjoint to the forgetful functor from the category of left fibrations over $\Delta^\op$
  \begin{equation}
  \label{test/adjunction/variant}
    \smallint\nerve \;:\; \Cat_1\times_{\Groupoid}\Set \;\rightleftarrows\; \mathrm{LFib}(\Delta^\op) \;:\; \forget 
  \end{equation}

\end{para}

\begin{lemma}[Slices of the refinement]
\label{nerve/refinement/slice}
  
  The square
  \begin{equation*}
    \begin{tikzcd}
      \int\nerve(i\downarrow I) \ar[r]  \ar[d, "\epsilon"] &
        \int\nerve(I) \ar[d, "\epsilon"] \\
      i\downarrow I  \ar[r]   & I
    \end{tikzcd}
  \end{equation*}
  is a pullback in $\Cat_1$. Hence, the slice category $i\downarrow_I\int\nerve(I)$ of the refinement may be identified with the refinement $\int\nerve(i\downarrow I)$ of the slice.

\end{lemma}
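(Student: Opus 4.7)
The plan is to verify the pullback directly at the level of objects and morphisms, leveraging the fact that the category of simplices $\smallint\Delta^n$ has the identity $\id_{\Delta^n}$ as an initial object (Example \ref{sset/example/simplex}). The main input is Lemma \ref{util/cone/initial-object/pullback}, which applied to $K = \smallint\Delta^n$ gives a pullback of sets
\[
  \Fun(\smallint\Delta^n,\, i\downarrow I) \;\cong\; \Fun(\smallint\Delta^n, I) \times_I (i\downarrow I),
\]
where the map on the right is evaluation at $\id_{\Delta^n}$, which is exactly the $n$-th component of $\epsilon$. Passing to the disjoint union over $[n]:\Delta^\op$ identifies the object sets of the two sides of the proposed pullback square.

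For morphisms, recall that $\smallint\nerve(J)\rightarrow\Delta^\op$ is a left fibration in sets, so a morphism from $([n],\sigma)$ to $([m],\tau)$ is determined by a map $\phi:[m]\rightarrow[n]$ in $\Delta$ together with the constraint $\sigma\circ\smallint\phi = \tau$; under $\epsilon$ its image is $\sigma$ applied to the unique arrow $\id_{\Delta^n}\rightarrow\phi$ in $\smallint\Delta^n$. A morphism over $\phi$ in the pullback $\smallint\nerve(I)\times_I(i\downarrow I)$ from $([n],\sigma,u)$ to $([m],\tau,v)$ therefore amounts to the two conditions
\begin{align*}
  \sigma\circ\smallint\phi &= \tau, \\
  \sigma(\id_{\Delta^n}\rightarrow\phi)\circ u &= v.
\end{align*}

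On the other side, under the object bijection a functor $\tilde\sigma:\smallint\Delta^n\rightarrow i\downarrow I$ corresponds to a pair $(\sigma,u)$ with $u = \tilde\sigma(\id_{\Delta^n})$, whose cone structure over $\sigma$ is determined by $\tilde\sigma(x) = (\sigma(x),\,\sigma(\id\rightarrow x)\circ u)$. Consequently $\tilde\sigma\circ\smallint\phi$ corresponds to $(\sigma\circ\smallint\phi,\, \sigma(\id_{\Delta^n}\rightarrow\phi)\circ u)$, and the morphism condition $\tilde\sigma\circ\smallint\phi = \tilde\tau$ in $\smallint\nerve(i\downarrow I)$ is precisely the conjunction of the two equations above. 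Thus morphism sets match, composition is inherited from the base $\Delta$, and the resulting functor $\smallint\nerve(i\downarrow I) \rightarrow \smallint\nerve(I)\times_I(i\downarrow I)$ is an isomorphism of $1$-categories.

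The main obstacle, such as it is, is purely notational: one must carefully track that $\id_{\Delta^n}$ is \emph{initial} (not final) in $\smallint\Delta^n$, and that morphisms in $\smallint\nerve(J)$ project to $\Delta^\op$ rather than $\Delta$. There is no deeper content beyond reassembling Lemma \ref{util/cone/initial-object/pullback} simplex-wise through the Grothendieck integral.
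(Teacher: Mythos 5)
Your proof is correct and rests on the same key input as the paper's: the pullback of Lemma \ref{util/cone/initial-object/pullback} applied to $K=\smallint\Delta^n$ with its initial object $\id_{\Delta^n}$, assembled over $\Delta^\op$. The only difference is that the paper justifies the fibrewise reduction abstractly (factoring through $I\times\Delta^\op$ and checking the pullback of coCartesian fibrations fibre by fibre), whereas you carry out the same assembly by an explicit object-and-morphism computation in the Grothendieck construction; both are sound.
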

\begin{proof}

  We may check this is a pullback by factorising
  \begin{equation*}
    \begin{tikzcd}
      \int\nerve(i\downarrow I) \ar[r]  \ar[d] &
        \int\nerve(I) \ar[d] \\
      i\downarrow I\times\Delta^\op  \ar[r] \ar[d] 
        \ar[dr, phantom, "\lrcorner" very near start]  & 
        I \times\Delta^\op \ar[d] \\
      i\downarrow I  \ar[r]   & I 
    \end{tikzcd}
  \end{equation*}
  and observing that the upper square is a commuting square of co-Cartesian fibrations over $\Delta^\op$, whence the pullback property can be checked fibre-wise. Hence, restricting to $\Delta^n:\Delta^\op$, we get a square
  \begin{equation*}
    \begin{tikzcd}
      \Fun(\smallint\Delta^n, i\downarrow I) \ar[r] \ar[d] &
        \Fun(\smallint\Delta^n, I) \ar[d] \\
      i\downarrow I \ar[r] &
        I
    \end{tikzcd}
  \end{equation*}
  to which we apply Lemma \ref{util/cone/initial-object/pullback}.
  \qedhere
  
\end{proof}

\begin{prop}
\label{nerve/counit/is-cofinal}

  The tautological refinement $\epsilon_\Delta$ is $\infty$-cofinal.

\end{prop}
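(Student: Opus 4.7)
The plan is to apply the $\infty$-categorical cofinality criterion \cite[Thm.~4.1.3.1]{HTT}: a functor $f:\mathcal{E}\to\mathcal{C}$ of $\infty$-categories is cofinal if and only if the slice $\mathcal{E}\times_{\mathcal{C}}\mathcal{C}_{c/}$ is weakly contractible for every object $c:\mathcal{C}$. Applied to $f = \epsilon_J$, this slice at $j:J$ is $j\downarrow_J\smallint\nerve(J)$, which by Lemma \ref{nerve/refinement/slice} is canonically isomorphic to $\smallint\nerve(j\downarrow J)$. Since $j\downarrow J$ carries an initial object $(j,\id_j)$, the proposition reduces to the following claim: \emph{for every $1$-category $K$ with initial object $e$, the category $\smallint\nerve(K)$ is weakly contractible.}

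By Thomason's theorem, which identifies the classifying space $B(\smallint X)$ of the category of simplices of a simplicial set $X$ with its geometric realisation $|X|$, the claim in turn reduces to the weak contractibility of the simplicial set $\nerve(K)$. I would establish this by exhibiting $\nerve(K)\to\ast$ as a trivial Kan fibration: every map $\partial\Delta^m\to\nerve(K)$ extends along $\partial\Delta^m\hookrightarrow\Delta^m$. Under the adjunction \eqref{test/adjunction/variant}, such an extension corresponds to extending a functor $\sigma:\smallint\partial\Delta^m\to K$ to a functor $\tilde\sigma:\smallint\Delta^m\to K$, where by Example \ref{sset/example/simplex-boundary} the domain $\smallint\partial\Delta^m\subseteq\smallint\Delta^m$ is the full subcategory on non-surjective simplices.

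The extension I propose is to set $\tilde\sigma(\tau) := e$ for every surjective $\tau:[k]\to[m]$. Functoriality then follows painlessly from the initiality of $e$: morphisms between surjective simplices are forced into $\Hom_K(e,e)=\{\id_e\}$ and so become identities; morphisms from a surjective $\tau$ to a non-surjective $\tau'$ receive the unique map $e\to\sigma(\tau')$; and no morphism from a non-surjective to a surjective simplex can exist, since composition cannot enlarge the image to all of $[m]$. The main obstacle is thus the combinatorial bookkeeping of these three cases, which is genuinely short; the substantive content of the proof is carried by the $\infty$-cofinality criterion combined with Lemma \ref{nerve/refinement/slice} and by Thomason's theorem.
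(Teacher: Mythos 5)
Your argument is correct, and its two reduction steps are exactly the paper's: invoke \cite[Thm.~4.1.3.1]{HTT} and Lemma \ref{nerve/refinement/slice} to reduce everything to the weak contractibility of $\smallint\nerve(K)$ for a category $K$ with initial object $e$. You diverge only in how that contractibility is established. The paper's proof cites the test-category machinery directly: $i\downarrow I$ is weakly contractible because it has an initial object, and $\nerve\dashv\smallint$ induce mutually inverse equivalences of homotopy categories \cite[Prop.~1.6.14]{Maltsiniotis}, so $\smallint\nerve(i\downarrow I)$ is weakly contractible as well. You instead use Thomason's identification $B(\smallint X)\simeq|X|$ to pass to the simplicial set $\nerve(K)$ and then check by hand that $\nerve(K)\rightarrow\point$ is a trivial Kan fibration, extending $\sigma:\smallint\partial\Delta^m\rightarrow K$ by the constant value $e$ on surjective simplices. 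Your case analysis is sound: with the paper's conventions a morphism $\tau\rightarrow\tau'$ in $\smallint\Delta^m$ corresponds to a factorisation $\tau'=\tau\circ\beta$, so images only shrink, no morphism runs from a non-surjective to a surjective simplex, and initiality of $e$ makes every square that must commute do so for free (this also covers $m=0$, where all simplices of $\Delta^0$ are surjective). Two observations on what each route buys. Thomason's theorem is itself one half of the assertion that $\Delta$ is a test category, so you have not genuinely shed the external input --- you have traded the invertibility of the unit of $\smallint\dashv\nerve$ in the homotopy category for the comparison $B\smallint X\simeq|X|$ plus an explicit combinatorial argument; in exchange you obtain a sharper and more self-contained conclusion ($\nerve(K)$ is a contractible Kan complex, not merely a weakly contractible object), and you bypass the appeal to weak contractibility of $i\downarrow I$ as a category. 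Finally, note that the adjunction \eqref{nerve/formula} is only defined on categories without non-identity automorphisms; this is harmless here, since in the paper's application $K=i\downarrow I$ is a poset, and your extension never uses the restriction.
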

\begin{proof}

  Let $i:I$. We must show that $i\downarrow_I\smallint\nerve(I)$ is weakly contractible \cite[Thm.~4.1.3.1]{HTT}.
  Using Lemma \ref{nerve/refinement/slice} we identify this with $\smallint\nerve(i\downarrow I)$.
  The latter is weakly contractible because $i\downarrow I$ is weakly contractible and $\nerve \dashv \smallint$ induce inverse equivalences of homotopy categories (as $\Delta$ is a test category \cite[Prop.~1.6.14]{Maltsiniotis}).
  \qedhere
  
\end{proof}

\begin{para}[Refinement of a diagram]
\label{nerve/of-diagram/definition}
  
  Let $U:I\rightarrow\opens(X)$ be a diagram. Precomposing $U$ with the counit $\epsilon$, we obtain an indexed diagram
  \begin{equation}
    \begin{tikzcd}
      \int\nerve(I) \ar[r, "U\circ\epsilon"] \ar[d] &
        \opens(X) \\
      \Delta^\op 
    \end{tikzcd}
  \end{equation}
  Denote the associated simplicial semi-representable presheaf by 
  \begin{equation}  
    \nerve( U)_\bullet \defeq 
      \push_{\smallint\nerve(I)\slice\Delta^\op}(U\epsilon)
  \end{equation}
\end{para}

\begin{lemma}
\label{nerve/fill-condition}

  Let $K\rightarrow L$ be a morphism in $s\Set$.
  A diagram $U:I\rightarrow\opens(X)$ of open sets has the local r.l.p.~against $\smallint K\slice \smallint L\slice\Delta^\op$ if and only if $(\smallint\nerve(I), U\epsilon)$ has the local r.l.p.~against $\smallint K\slice \smallint L$.
  
\end{lemma}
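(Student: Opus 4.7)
The plan is to unpack everything through the adjunction $\forget\dashv\smallint\nerve$ of \eqref{test/adjunction/variant}. Applied to the left fibrations $\smallint K\rightarrow\Delta^\op$ and $\smallint L\rightarrow\Delta^\op$, this adjunction supplies natural bijections
\[
  \Fun(\smallint K,I)\cong\Fun_{\Delta^\op}(\smallint K,\smallint\nerve(I)),
  \qquad
  \Fun(\smallint L,I)\cong\Fun_{\Delta^\op}(\smallint L,\smallint\nerve(I)),
\]
under which postcomposition with the counit $\epsilon:\smallint\nerve(I)\rightarrow I$ recovers a functor from its transpose over $\Delta^\op$.

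Given these, I would first invoke naturality in the first variable: a diagram $\sigma:\smallint K\rightarrow I$ has a unique transpose $\tilde\sigma:\smallint K\rightarrow\smallint\nerve(I)$ over $\Delta^\op$ with $\epsilon\tilde\sigma=\sigma$, and its extensions $\tau:\smallint L\rightarrow I$ along $\smallint K\subseteq\smallint L$ will be in bijection with extensions $\tilde\tau:\smallint L\rightarrow\smallint\nerve(I)$ of $\tilde\sigma$ over $\Delta^\op$. Second, the counit relation $\epsilon\tilde\tau=\tau$ (and its analogue for $\sigma$) identifies $U\circ\tau$ with $(U\epsilon)\circ\tilde\tau$ and $U\circ\sigma$ with $(U\epsilon)\circ\tilde\sigma$ as functors into $\opens(X)$; their limits therefore coincide, giving $U_\tau=(U\epsilon)_{\tilde\tau}$ and $U_\sigma=(U\epsilon)_{\tilde\sigma}$.

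Combining these, the local right lifting datum on the left --- lifts $\tau$ extending $\sigma$ whose open sets $U_\tau$ jointly cover $U_\sigma$ --- will match term-by-term the local right lifting datum on the right --- lifts $\tilde\tau$ over $\Delta^\op$ extending $\tilde\sigma$ whose $(U\epsilon)_{\tilde\tau}$ jointly cover $(U\epsilon)_{\tilde\sigma}$. The whole lemma thereby reduces to a repackaging of $\forget\dashv\smallint\nerve$; I do not foresee any serious obstacle, and the one point that will deserve attention is the compatibility of the adjunction's bijection with restriction along $\smallint K\subseteq\smallint L$, which is immediate from naturality.
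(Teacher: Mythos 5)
Your proposal is correct and is essentially the paper's own argument: the paper likewise applies the adjunction \eqref{test/adjunction/variant} (to the arrow category, so that the bijection respects the restriction along $\smallint K\subseteq\smallint L$) to identify the two sets of lifts, and then notes $(U\circ\epsilon)_\tau = U_{\epsilon\circ\tau}$ to match the covering data. The only point worth flagging is that the adjunction is available because $I$ lies in $\Cat_1\times_\Groupoid\Set$ (here $I$ is a poset), which you implicitly assume.
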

\begin{proof}

  The adjunction \eqref{test/adjunction/variant} applied to the arrow category (cf.~Lemma \ref{util/fibration/power}) identifies the sets
  \[
    \left\{
      \begin{tikzcd}
        \smallint K \ar[r] \ar[d] & \smallint\nerve(I) \ar[d] \\
        \smallint L \ar[r] \ar[ur, dashed] & \Delta^\op 
      \end{tikzcd}
    \right\} 
    \quad \cong \quad
    \left\{
      \begin{tikzcd}
        \smallint K \ar[r] \ar[d] & I \ar[d] \\
        \smallint L \ar[r] \ar[ur, dashed] & \point 
      \end{tikzcd}
    \right\}
  \]
  as well as the vertex 
  \[
    (U\circ \epsilon)_\tau = \lim(U\circ\epsilon\circ\tau) = 
      U_{\epsilon\circ\tau}
  \]
  for any lift $\tau$ of either side. 
  \qedhere

\end{proof}

\begin{theorem}
\label{nerve/theorem}

  Let $U:I\rightarrow\opens(X)$ be a diagram. The following are equivalent: \begin{enumerate}
    
    \item $U$ is an atlas.
    
    \item $U$ has the local r.l.p.~for $\int\partial\Delta^n\slice \int\Delta^n$, $n:\N$.
    
    \item $\nerve(U)_\bullet$ is a hypercover.
    
  \end{enumerate}
  In particular, every atlas restricts along an $\infty$-cofinal functor to the index diagram of some hypercover.

\end{theorem}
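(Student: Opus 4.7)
The strategy is simply to assemble the equivalence from the machinery developed in the preceding sections, which was arranged precisely so that this theorem becomes short.

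The implication $1 \Leftrightarrow 2$ is nothing other than Proposition~\ref{lifting-property/atlas}: the equivalence of its items~1 and~6 states exactly that being an atlas is equivalent to the local r.l.p.\ against $\smallint\partial\Delta^n \slice \smallint\Delta^n$ for all $n:\N$. No new work is needed here.

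For $2 \Leftrightarrow 3$ the plan is to chain Proposition~\ref{hypercover/fill-condition} together with Lemma~\ref{nerve/fill-condition}. By the very construction in \ref{nerve/of-diagram/definition}, the index diagram of $\nerve(U)_\bullet$ is $(\smallint\nerve(I), U\circ\epsilon)$ lying over $\Delta^\op$. Proposition~\ref{hypercover/fill-condition} reformulates the hypercover condition as the local r.l.p.\ of this index diagram against $\smallint\partial\Delta^n \slice \smallint\Delta^n \slice \Delta^\op$ for every $n:\N$. Lemma~\ref{nerve/fill-condition}, applied with $K = \partial\Delta^n$ and $L = \Delta^n$, then transports this condition (phrased in left fibrations over $\Delta^\op$) across the adjunction \eqref{test/adjunction/variant} into the local r.l.p.\ of $U:I\rightarrow\opens(X)$ itself against $\smallint\partial\Delta^n \slice \smallint\Delta^n$, which is condition~2.

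The ``in particular'' clause then drops out: given an atlas $U:I\rightarrow\opens(X)$, the equivalence $1 \Leftrightarrow 3$ produces a hypercover $\nerve(U)_\bullet$ whose index diagram is exactly the restriction $U\circ\epsilon$ of $U$ along $\epsilon:\smallint\nerve(I)\rightarrow I$, and $\epsilon$ is $\infty$-cofinal by Proposition~\ref{nerve/counit/is-cofinal}. There is no real obstacle in the proof: all of the substantive content lives in the earlier sections (in particular the comparison Lemma~\ref{nerve/fill-condition} and the cofinality result Proposition~\ref{nerve/counit/is-cofinal}), and the role of this theorem is to package that content as a clean correspondence between atlases and hypercovers.
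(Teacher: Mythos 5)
Your proposal is correct and follows essentially the same route as the paper: $1\Leftrightarrow 2$ via Proposition \ref{lifting-property/atlas}, and $2\Leftrightarrow 3$ by chaining Lemma \ref{nerve/fill-condition} with Proposition \ref{hypercover/fill-condition}, with the ``in particular'' clause supplied by Proposition \ref{nerve/counit/is-cofinal}. No substantive differences.
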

\begin{proof}

  The equivalence 1 $\Leftrightarrow$ 2 is Proposition \ref{lifting-property/atlas}. Now, 2 $\Leftrightarrow$ 3 by the chain of logical equivalences:
  \begin{itemize}
    \item[] $U$ satisfies the local filling conditions for $\int\partial\Delta^n\slice \int\Delta^n$.
    
    \item[$\Leftrightarrow$] $\smallint\nerve(U)$ satisfies the local filling conditions for $\smallint\partial\Delta^n\slice \smallint\Delta^n\slice\Delta^\op$ (Lemma \ref{nerve/fill-condition}).
    
    \item[$\Leftrightarrow$] $\nerve(U)$ is a hypercover (Proposition \ref{hypercover/fill-condition}). 
    \qedhere
  \end{itemize}
  
\end{proof}

\printbibliography
\end{document}